\def\phi{\varphi}
\def\rho{\varrho}
\def\epsilon{\varepsilon}
\numberwithin{equation}{section}
\theoremstyle{plain}
\newtheorem{theorem}[equation]{Theorem}
\newtheorem{lemma}[equation]{Lemma}
\theoremstyle{definition}
\newtheorem{definition}[equation]{Definition}
\theoremstyle{remark}
\renewcommand{\ln}{\log}
\renewcommand{\leq}{\leqslant}
\renewcommand{\geq}{\geqslant}
\begin{document}
\title[Continuous characterization of \ $B_{p(\cdot ),q(\cdot )}^{\alpha
(\cdot )}$ spaces]{Continuous characterization of Besov spaces of variable
smoothness and integrability}
\author[ S. Ben Mahmoud and D. Drihem]{Salah Ben Mahmoud and Douadi Drihem }

\begin{abstract}
In this paper we obtain new equivalent quasi-norms of the Besov spaces of
variable smoothness and integrability. Our main tools are the continuous
version of Calder\'{o}n reproducing formula, maximal inequalities and
variable exponent technique, but allowing the parameters to vary from point
to point will raise extra difficulties which, in general, are overcome by
imposing regularity assumptions on these exponents.\vskip5pt
\end{abstract}

\date{\today }
\subjclass[2000]{ 46E35}
\keywords{Besov space, variable exponent, Calder\'{o}n reproducing formula.\\
\\
Salah Ben Mahmoud and Douadi Drihem\\
M'sila University, Department of Mathematics, Laboratory of Functional
Analysis and Geometry of Spaces , P.O. Box 166, M'sila 28000, Algeria,
e-mail:\texttt{\ salahmath2016@gmail.com (Salah Ben Mahmoud,} \texttt{%
douadidr@yahoo.fr, douadi.drihem@univ-msila.dz (Douadi Drihem),.}}
\maketitle

\section{Introduction}

Besov spaces of variable smoothness and integrability initially appeared in
the paper of A. Almeida and P. H\"{a}st\"{o} \cite{AH},\ where several basic
properties were shown, such as the Fourier analytical characterization.
Later the author \cite{D3} characterized these spaces by local means and
established the atomic characterization. After that, Kempka and Vyb\'{\i}ral 
\cite{KV122} characterized these spaces by ball means of differences and
also by local means. The duality of these function spaces is given in \cite%
{IN14} and \cite{N14}.

The interest in these spaces comes not only from theoretical reasons but
also from their applications to several classical problems in analysis. For
further considerations of PDEs, we refer to \cite{DHHR} and references
therein.

The main aim of this paper is to present new equivalent quasi-norm of these
function spaces, which based on the continuous version of Calder\'{o}n
reproducing formula. Firstly, we define new family of function spaces and
prove their basic properties. Secondly, under some suitable assumptions on
the parameters we prove that these function spaces are just the Besov spaces
of variable smoothness and integrability of Almeida and H\"{a}st\"{o}.
Finally we characterize these function spaces in terms of continuous local
means.

This paper needs some notation. As usual, we denote by $\mathbb{N}_{0}$ the
set of all non-negative integers. The notation $f\lesssim g$ means that $%
f\leq c\,g$ for some independent positive constant $c$ (and non-negative
functions $f$ and $g$), and $f\approx g$ means that $f\lesssim g\lesssim f$.
For $x\in \mathbb{R}$, $\lfloor x\rfloor $ stands for the largest integer
smaller than or equal to $x$.\vskip5pt

If $E\subset {\mathbb{R}^{n}}$ is a measurable set, then $|E|$ stands for
the Lebesgue measure of $E$ and $\chi _{E}$ denotes its characteristic
function. By $c$ we denote generic positive constants, which may have
different values at different occurrences. Although the exact values of the
constants are usually irrelevant for our purposes, sometimes we emphasize
their dependence on certain parameters (e.g., $c(p)$ means that $c$ depends
on $p$, etc.).

The symbol $\mathcal{S}(\mathbb{R}^{n})$ is used in place of the set of all
Schwartz functions on $\mathbb{R}^{n}$. We define the Fourier transform of a
function $f\in \mathcal{S}(\mathbb{R}^{n})$ by 
\begin{equation*}
\mathcal{F}(f)(\xi ):=(2\pi )^{-n/2}\int_{\mathbb{R}^{n}}e^{-ix\cdot \xi
}f(x)dx,\quad \xi \in \mathbb{R}^{n}.
\end{equation*}

We denote by $\mathcal{S}^{\prime }(\mathbb{R}^{n})$ the dual space of all
tempered distributions on $\mathbb{R}^{n}$. The variable exponents that we
consider are always measurable functions $p$ on $\mathbb{R}^{n}$ with range
in $(0,\infty ]$. We denote by $\mathcal{P}_{0}(\mathbb{R}^{n})$ the set of
such functions bounded away from the origin (i.e., $p^{-}>0$). The subset of
variable exponents with range in $[1,\infty ]$ is denoted by $\mathcal{P}(%
\mathbb{R}^{n})$. We use the standard notation: 
\begin{equation*}
p^{-}:=\underset{x\in \mathbb{R}^{n}}{\text{ess-inf}}\,p(x)\quad \text{and}%
\quad p^{+}:=\underset{x\in \mathbb{R}^{n}}{\text{ess-sup}}\,p(x).
\end{equation*}%
We put%
\begin{equation*}
\omega _{p}(t)=\left\{ 
\begin{array}{ccc}
t^{p} & \text{if} & p\in (0,\infty )\text{ and }t>0, \\ 
0 & \text{if} & p=\infty \text{ and }0<t\leq 1, \\ 
\infty & \text{if} & p=\infty \text{ and }t>1.%
\end{array}%
\right.
\end{equation*}%
The variable exponent modular is defined by 
\begin{equation*}
\varrho _{p(\cdot )}(f):=\int_{\mathbb{R}^{n}}\omega _{p(x)}(|f(x)|)\,dx.
\end{equation*}%
The variable exponent Lebesgue space $L^{p(\cdot )}$\ consists of measurable
functions $f$ on $\mathbb{R}^{n}$ such that $\varrho _{p(\cdot )}(\lambda
f)<\infty $ for some $\lambda >0$. We define the Luxemburg (quasi)-norm on
this space by the formula 
\begin{equation*}
\big\|f\big\|_{p(\cdot )}:=\inf \Big\{\lambda >0:\varrho _{p(\cdot )}\Big(%
\frac{f}{\lambda }\Big)\leq 1\Big\}.
\end{equation*}%
A useful property is that $\big\|f\big\|_{p(\cdot )}\leq 1$ if and only if $%
\varrho _{p(\cdot )}(f)\leq 1$ (see Lemma 3.2.4 from \cite{DHHR}). \vskip5pt

Let $p,q\in \mathcal{P}_{0}(\mathbb{R}^{n})$. The mixed Lebesgue-sequence
space $\ell ^{q(\cdot )}(L^{p(\cdot )})$ is defined on sequences of $%
L^{p(\cdot )}$-functions by the modular 
\begin{equation*}
\varrho _{\ell ^{q(\cdot )}(L^{p\left( \cdot \right)
})}((f_{v})_{v}):=\sum_{v=0}^{\infty }\inf \Big\{\lambda _{v}>0:\varrho
_{p(\cdot )}\Big(\frac{f_{v}}{\lambda _{v}^{1/q(\cdot )}}\Big)\leq 1\Big\}.
\end{equation*}%
The (quasi)-norm is defined from this as usual:%
\begin{equation}
\big\|(f_{v})_{v}\big\|_{\ell ^{q(\cdot )}(L^{p\left( \cdot \right)
})}:=\inf \Big\{\mu >0:\varrho _{\ell ^{q(\cdot )}(L^{p(\cdot )})}\Big(\frac{%
1}{\mu }(f_{v})_{v}\Big)\leq 1\Big\}.  \label{mixed-norm}
\end{equation}%
If $q^{+}<\infty $, then we can replace \eqref{mixed-norm} by a simpler
expression: 
\begin{equation*}
\varrho _{\ell ^{q(\cdot )}(L^{p(\cdot )})}((f_{v})_{v})=\sum_{v=0}^{\infty }%
\big\||f_{v}|^{q(\cdot )}\big\|_{\frac{p(\cdot )}{q(\cdot )}}.
\end{equation*}%
We use this notation even when $q^{+}=\infty $. Let $(f_{t})_{0<t\leq 1}$\
be a sequence of measurable functions when $t$ is a continuous variable. We
set%
\begin{equation*}
\varrho _{\widetilde{\ell ^{q(\cdot )}(L^{p\left( \cdot \right) })}%
}((f_{t})_{0<t\leq 1}):=\int_{0}^{1}\inf \big\{\lambda _{t}:\varrho
_{p(\cdot )}\big(\frac{f_{t}}{\lambda _{t}^{1/q(\cdot )}}\big)\leq 1\big\}%
\frac{dt}{t}.
\end{equation*}%
The (quasi)-norm is defined by%
\begin{equation*}
\big\|(f_{t})_{0<t\leq 1}\big\|_{\widetilde{\ell ^{q(\cdot )}(L^{p\left(
\cdot \right) })}}:=\inf \Big\{\mu >0:\varrho _{\widetilde{\ell ^{q(\cdot
)}(L^{p\left( \cdot \right) })}}\Big(\frac{1}{\mu }(f_{t})_{0<t\leq 1}\Big)%
\leq 1\Big\}.
\end{equation*}

We say that a real valued-function $g$ on $\mathbb{R}^{n}$ is \textit{%
locally }log\textit{-H\"{o}lder continuous} on $\mathbb{R}^{n}$, abbreviated 
$g\in C_{\text{loc}}^{\log }(\mathbb{R}^{n})$, if there exists a constant $%
c_{\log }(g)>0$ such that 
\begin{equation}
\left\vert g(x)-g(y)\right\vert \leq \frac{c_{\log }(g)}{\log
(e+1/\left\vert x-y\right\vert )}  \label{lo-log-Holder}
\end{equation}%
for all $x,y\in \mathbb{R}^{n}$.

We say that $g$ satisfies the log\textit{-H\"{o}lder decay condition}, if
there exist two constants $g_{\infty }\in \mathbb{R}$ and $c_{\log }>0$ such
that%
\begin{equation*}
\left\vert g(x)-g_{\infty }\right\vert \leq \frac{c_{\log }}{\log
(e+\left\vert x\right\vert )}
\end{equation*}%
for all $x\in \mathbb{R}^{n}$. We say that $g$ is \textit{globally} log%
\textit{-H\"{o}lder continuous} on $\mathbb{R}^{n}$, abbreviated $g\in
C^{\log }(\mathbb{R}^{n})$, if it is\textit{\ }locally log-H\"{o}lder
continuous on $\mathbb{R}^{n}$ and satisfies the log-H\"{o}lder decay\textit{%
\ }condition.\textit{\ }The constants $c_{\log }(g)$ and $c_{\log }$ are
called the \textit{locally }log\textit{-H\"{o}lder constant } and the log%
\textit{-H\"{o}lder decay constant}, respectively\textit{.} We note that any
function $g\in C_{\text{loc}}^{\log }(\mathbb{R}^{n})$ always belongs to $%
L^{\infty }$.\vskip5pt

We define the following class of variable exponents: 
\begin{equation*}
\mathcal{P}_{0}^{\mathrm{log}}(\mathbb{R}^{n}):=\Big\{p\in \mathcal{P}_{0}(%
\mathbb{R}^{n}):\frac{1}{p}\in C^{\log }(\mathbb{R}^{n})\Big\},
\end{equation*}%
which is introduced in \cite[Section 2]{DHHMS}. The class $\mathcal{P}^{%
\mathrm{log}}(\mathbb{R}^{n})$ is defined analogously. We define 
\begin{equation*}
\frac{1}{p_{\infty }}:=\lim_{|x|\rightarrow \infty }\frac{1}{p(x)},
\end{equation*}%
and we use the convention $\frac{1}{\infty }=0$. Note that although $\frac{1%
}{p}$ is bounded, the variable exponent $p$ itself can be unbounded. We put 
\begin{equation*}
\Psi \left( x\right) :=\sup_{\left\vert y\right\vert \geq \left\vert
x\right\vert }\left\vert \varphi \left( y\right) \right\vert 
\end{equation*}%
for $\varphi \in L^{1}$. We suppose that $\Psi \in L^{1}$. Then it was
proved in \cite[Lemma \ 4.6.3]{DHHR} that if $p\in \mathcal{P}^{\mathrm{log}%
}(\mathbb{R}^{n})$, then 
\begin{equation*}
\big\|\varphi _{\varepsilon }\ast f\big\|_{{p(\cdot )}}\leq c\big\|\Psi %
\big\|_{{1}}\big\|f\big\|_{{p(\cdot )}}
\end{equation*}%
for all $f\in L^{p(\cdot )}$, where 
\begin{equation*}
\varphi _{\varepsilon }:=\frac{1}{\varepsilon ^{n}}\varphi \left( \frac{%
\cdot }{\varepsilon }\right) ,\quad \varepsilon >0.
\end{equation*}%
We put 
\begin{equation*}
\eta _{t,m}(x):=t^{-n}(1+t^{-1}\left\vert x\right\vert )^{-m}
\end{equation*}%
for any $x\in \mathbb{R}^{n}$, $t>0$ and $m>0$. Note that $\eta _{t,m}\in
L^{1}$ when $m>n$ and that $\big\|\eta _{t,m}\big\|_{1}=c(m)$ is independent
of $t$. If $t=2^{-v}$, $v\in \mathbb{N}_{0}$ then we put 
\begin{equation*}
\eta _{v,m}:=\eta _{2^{-v},m}.
\end{equation*}%
We refer to the recent monograph \cite{CF13} for further properties,
historical remarks and references on variable exponent spaces.

\section{Basic tools}

In this section we present some useful results. The following lemma is
proved in \cite[Lemma 6.1]{DHR} (see also \cite[Lemma 19]{KV122}).

\begin{lemma}
\label{DHR-lemma}Let $\alpha \in C_{\mathrm{loc}}^{\log }(\mathbb{R}^{n})$, $%
m\in \mathbb{N}_{0}$ and let $R\geq c_{\log }(\alpha )$, where $c_{\log
}(\alpha )$ is the constant from \eqref{lo-log-Holder} for $g=\alpha $. Then
there exists a constant $c>0$ such that 
\begin{equation*}
t^{-\alpha (x)}\eta _{t,m+R}(x-y)\leq c\text{ }t^{-\alpha (y)}\eta
_{t,m}(x-y)
\end{equation*}%
for any $0<t\leq 1$ and $x,y\in \mathbb{R}^{n}$.
\end{lemma}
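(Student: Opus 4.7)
The plan is to reduce the claim to a scalar pointwise inequality and then verify it using the locally log-H\"older continuity of $\alpha$. After substituting the definition of $\eta_{t,m+R}$ and $\eta_{t,m}$, the common factor $t^{-n}(1+t^{-1}|x-y|)^{-m}$ cancels from both sides, so the desired inequality is equivalent to
\begin{equation*}
t^{\alpha(y)-\alpha(x)} \leq c\,\bigl(1+t^{-1}|x-y|\bigr)^{R}.
\end{equation*}
The case $t=1$ is trivial, and for $0<t<1$ one has $\log(1/t)>0$, so taking logarithms it suffices to prove
\begin{equation*}
(\alpha(x)-\alpha(y))\log(1/t)\leq \log c+R\log\bigl(1+t^{-1}|x-y|\bigr).
\end{equation*}

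The heart of the argument is to establish the pointwise estimate
\begin{equation*}
|\alpha(x)-\alpha(y)|\log(1/t) \leq c_{\log}(\alpha)\bigl(1+\log(1+t^{-1}|x-y|)\bigr),
\end{equation*}
from which the conclusion follows by the hypothesis $R\geq c_{\log}(\alpha)$ together with the choice $\log c\geq c_{\log}(\alpha)$. It is at this point that the log-H\"older hypothesis
\begin{equation*}
|\alpha(x)-\alpha(y)|\leq \frac{c_{\log}(\alpha)}{\log(e+1/|x-y|)}
\end{equation*}
is used; the goal is to absorb the factor $\log(1/t)$ into either the denominator $\log(e+1/|x-y|)$ or the term $\log(1+t^{-1}|x-y|)$.

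I would split into two cases according to the size of $|x-y|$ relative to $t$. If $|x-y|\leq t$, then in particular $|x-y|\leq 1$ and $\log(e+1/|x-y|)\geq \log(1/|x-y|)\geq \log(1/t)$, so the log-H\"older bound immediately yields $|\alpha(x)-\alpha(y)|\log(1/t)\leq c_{\log}(\alpha)$. If $|x-y|>t$, then $\log(e+1/|x-y|)\geq 1$, so $|\alpha(x)-\alpha(y)|\leq c_{\log}(\alpha)$; combined with the identity $\log(1/t)=\log(t^{-1}|x-y|)+\log(1/|x-y|)$ and the observation that $\log(1/|x-y|)\leq 0$ when $|x-y|>1$ while $\log(1/|x-y|)\leq \log(e+1/|x-y|)$ otherwise, this delivers $\log(1/t)\leq \log(1+t^{-1}|x-y|)+\log(e+1/|x-y|)$, which after multiplying by $|\alpha(x)-\alpha(y)|$ and using $\log(e+1/|x-y|)\geq 1$ yields the desired estimate.

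The entire argument is routine once the right split is made; the main (and only mild) obstacle is organizing the case analysis so that the factor $\log(1/t)$ is controlled by a constant multiple of $1+\log(1+t^{-1}|x-y|)$ uniformly in every regime, and then verifying that the constant $c$ absorbs all collected terms independently of $t$, $x$, and $y$.
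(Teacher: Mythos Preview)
Your argument is correct. The reduction to the scalar inequality $t^{\alpha(y)-\alpha(x)}\le c\,(1+t^{-1}|x-y|)^{R}$ is exactly what the definition of $\eta_{t,m}$ gives, and your two-case analysis (according to whether $|x-y|\le t$ or $|x-y|>t$) cleanly produces the key bound $|\alpha(x)-\alpha(y)|\log(1/t)\le c_{\log}(\alpha)\bigl(1+\log(1+t^{-1}|x-y|)\bigr)$, from which the conclusion follows with $c=e^{c_{\log}(\alpha)}$ and $R\ge c_{\log}(\alpha)$. The only trivial point worth noting explicitly is that $x=y$ is handled separately (both sides equal since $\alpha(x)=\alpha(y)$), so that the divisions by $|x-y|$ in Case~1 are legitimate.

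As for the comparison: the paper does not actually prove this lemma; it is quoted from \cite[Lemma~6.1]{DHR} (see also \cite[Lemma~19]{KV122}). Your direct elementary argument is the standard one underlying those references.
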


The previous lemma allows us to treat the variable smoothness in many cases
as if it were not variable at all. Namely, we can move the factor $%
t^{-\alpha (x)}$ inside the convolution as follows: 
\begin{equation*}
t^{-\alpha (x)}\eta _{t,m+R}\ast f(x)\leq c\text{ }\eta _{t,m}\ast
(t^{-\alpha (\cdot )}f)(x).
\end{equation*}

The following lemma is from \cite[Lemma 3.14]{YZW15}.

\begin{lemma}
\label{DZW}Let $p,q\in \mathcal{P}_{0}(\mathbb{R}^{n})$. Let $f$ be a
measurable function on $\mathbb{R}^{n}$. If 
\begin{equation*}
\big\||f|^{q(\cdot )}\big\|_{\frac{p(\cdot )}{q(\cdot )}}\geq 1
\end{equation*}%
then 
\begin{equation*}
\big\|f\big\|_{p(\cdot )}^{q^{-}}\leq \big\||f|^{q(\cdot )}\big\|_{\frac{%
p(\cdot )}{q(\cdot )}}.
\end{equation*}
\end{lemma}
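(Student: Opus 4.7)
The plan is to prove the inequality directly from the Luxemburg norm's unit-ball property applied on both sides. Set $\lambda := \||f|^{q(\cdot)}\|_{p(\cdot)/q(\cdot)}$, which by hypothesis satisfies $\lambda \geq 1$. It suffices to show $\|f\|_{p(\cdot)} \leq \lambda^{1/q^{-}}$, and by the property $\|g\|_{p(\cdot)} \leq 1 \Leftrightarrow \varrho_{p(\cdot)}(g) \leq 1$ recalled in the introduction, this is equivalent to establishing
\[
\varrho_{p(\cdot)}\!\left(\frac{f}{\lambda^{1/q^{-}}}\right) \leq 1.
\]

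First I would invoke the same unit-ball property on the outer norm to obtain $\varrho_{p(\cdot)/q(\cdot)}\bigl(|f|^{q(\cdot)}/\lambda\bigr) \leq 1$. Assuming momentarily that $p^{+}<\infty$ and $q^{+}<\infty$, I unfold both modulars as honest integrals, so the hypothesis reads
\[
\int_{\mathbb{R}^n}\frac{|f(x)|^{p(x)}}{\lambda^{p(x)/q(x)}}\,dx \leq 1,
\]
and the goal becomes $\int_{\mathbb{R}^n} |f(x)|^{p(x)}/\lambda^{p(x)/q^{-}}\,dx \leq 1$. Since $\lambda \geq 1$ and $q(x) \geq q^{-}$, one has $p(x)/q(x) \leq p(x)/q^{-}$, hence $\lambda^{p(x)/q(x)} \leq \lambda^{p(x)/q^{-}}$, which gives the pointwise domination $|f(x)|^{p(x)}/\lambda^{p(x)/q^{-}} \leq |f(x)|^{p(x)}/\lambda^{p(x)/q(x)}$. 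Integrating this inequality closes the argument.

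To remove the boundedness assumption on the exponents, I would treat the sets $\{p=\infty\}$ and $\{q=\infty\}$ separately. On $\{p(x)=\infty\}$ the condition $\varrho_{p(\cdot)/q(\cdot)}(|f|^{q(\cdot)}/\lambda)\leq 1$ forces $|f(x)|^{q(x)}/\lambda \leq 1$, equivalently $|f(x)| \leq \lambda^{1/q(x)}$ a.e., and $\lambda \geq 1$ with $q(x)\geq q^{-}$ upgrades this to $|f(x)| \leq \lambda^{1/q^{-}}$, which is exactly what the $p$-modular requires at $p=\infty$. The remaining set is handled by the integral calculation above applied to the finite part of $p$.

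The argument is essentially a one-line modular comparison; the only real subtlety, and the step I expect to require the most care, is handling the degenerate cases where $p$ or $q$ is infinite, since there the modular is not given by the clean integral formula and one must verify the essential-supremum constraint separately. Everything else reduces to the elementary observation that $\lambda \geq 1$ combined with $q(x)\geq q^{-}$ makes $t \mapsto \lambda^{t}$ monotone in the right direction.
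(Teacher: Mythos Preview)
The paper does not actually prove this lemma; it merely cites it as \cite[Lemma~3.14]{YZW15}. Your modular-comparison argument is correct and is essentially the standard proof one finds in that reference: set $\lambda=\big\||f|^{q(\cdot)}\big\|_{p(\cdot)/q(\cdot)}\geq 1$, pass to $\varrho_{p(\cdot)/q(\cdot)}\bigl(|f|^{q(\cdot)}/\lambda\bigr)\leq 1$ via the unit-ball property, and use the pointwise inequality $\lambda^{p(x)/q(x)}\leq\lambda^{p(x)/q^{-}}$ (valid because $\lambda\geq 1$ and $q(x)\geq q^{-}$) to conclude $\varrho_{p(\cdot)}\bigl(f/\lambda^{1/q^{-}}\bigr)\leq 1$. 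The only point you leave slightly informal is the set $\{q=\infty\}$, which you flag but do not write out; in the paper's applications the expression $|f|^{q(\cdot)}$ is only used with $q^{+}<\infty$, so your treatment of $\{p=\infty\}$ together with the finite-exponent integral computation on the complement is enough.
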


The next lemma is a Hardy type inequality, see \cite{Mo87}.

\begin{lemma}
\label{Hardy-inequality1}Let $s>0$ and $(\varepsilon _{t})_{0<t\leq 1}$ be a
sequence of positive measurable functions when $t$ is a continuous variable.
Let%
\begin{equation*}
\eta _{t}=t^{s}\int_{t}^{1}\tau ^{-s}\varepsilon _{\tau }\frac{d\tau }{\tau }%
\quad \text{and\quad }\delta _{t}=t^{-s}\int_{0}^{t}\tau ^{s}\varepsilon
_{\tau }\frac{d\tau }{\tau }.
\end{equation*}%
Then there exists a constant $c>0\ $\textit{depending only on }$s$ such that%
\begin{equation*}
\int_{0}^{1}\eta _{t}\frac{dt}{t}+\int_{0}^{1}\delta _{t}\frac{dt}{t}\leq
c\int_{0}^{1}\varepsilon _{t}\frac{dt}{t}.
\end{equation*}
\end{lemma}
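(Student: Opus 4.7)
The plan is to prove both inequalities separately by a direct Fubini-type interchange of the order of integration, which is the standard mechanism behind continuous Hardy inequalities. Since $(\varepsilon_t)$ is nonnegative and everything in sight is measurable, Tonelli's theorem applies without any integrability caveat.

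For the first piece, I would write
\begin{equation*}
\int_0^1 \eta_t\,\frac{dt}{t}=\int_0^1\!\int_t^1 t^{s}\tau^{-s}\varepsilon_\tau\,\frac{d\tau}{\tau}\,\frac{dt}{t},
\end{equation*}
and switch the order to integrate over $\{(t,\tau):0<t<\tau<1\}$. The inner integral becomes $\int_0^\tau t^{s-1}\,dt=\tau^s/s$, and after multiplying by $\tau^{-s}$ one gets exactly $s^{-1}\int_0^1\varepsilon_\tau\,d\tau/\tau$. This step relies only on $s>0$ so that the primitive $t^s/s$ is finite at $0$.

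For the second piece the same device gives
\begin{equation*}
\int_0^1 \delta_t\,\frac{dt}{t}=\int_0^1\!\int_0^t t^{-s}\tau^{s}\varepsilon_\tau\,\frac{d\tau}{\tau}\,\frac{dt}{t}=\int_0^1\tau^{s}\varepsilon_\tau\Bigl(\int_\tau^1 t^{-s-1}\,dt\Bigr)\frac{d\tau}{\tau},
\end{equation*}
and the inner integral equals $(\tau^{-s}-1)/s\leq \tau^{-s}/s$. Multiplying by $\tau^s$ gives the bound $s^{-1}\int_0^1\varepsilon_\tau\,d\tau/\tau$. Here again the use of $s>0$ is essential, this time so that $t^{-s}$ has an integrable primitive on $[\tau,1]$.

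Adding the two estimates yields the conclusion with constant $c=2/s$. There is no real obstacle here; the only subtlety is making sure the Fubini interchange is legitimate (handled by nonnegativity) and tracking that the factor $s$ does not degenerate, which is guaranteed by the hypothesis $s>0$. The continuous character of the variable $t$ in place of a sum over $v\in\mathbb{N}_0$ does not affect the argument at all, since we are simply replacing a geometric-series estimate by its integral analogue.
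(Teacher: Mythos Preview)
Your proof is correct: the Tonelli interchange is fully justified by nonnegativity, and both inner integrals are computed correctly, yielding the explicit constant $c=2/s$. The paper itself does not supply a proof of this lemma but only cites \cite{Mo87}, so there is nothing to compare; your argument is the standard one and is entirely adequate here.
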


\begin{lemma}
\label{r-trick}Let $r,N>0$, $m>n$ and $\theta ,\omega \in \mathcal{S}\left( 
\mathbb{R}^{n}\right) $ with $\mathrm{supp}\,\mathcal{F}\omega \subset 
\overline{B(0,1)}$. Then there exists a constant $c=c(r,m,n)>0$ such that
for all $g\in \mathcal{S}^{\prime }\left( \mathbb{R}^{n}\right) $, we have%
\begin{equation*}
\left\vert \theta _{N}\ast \omega _{N}\ast g\left( x\right) \right\vert \leq
c(\eta _{N,m}\ast \left\vert \omega _{N}\ast g\right\vert
^{r}(x))^{1/r},\quad x\in \mathbb{R}^{n},
\end{equation*}%
where $\theta _{N}(\cdot ):=N^{n}\theta (N\cdot )$, $\omega _{N}(\cdot
):=N^{n}\omega (N\cdot )$ and $\eta _{N,m}:=N^{n}(1+N\left\vert \cdot
\right\vert )^{-m}$.
\end{lemma}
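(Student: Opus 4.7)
The plan is to invoke the classical ``$r$-trick'' for band-limited functions. First, note that $F:=\omega_N\ast g$ has Fourier support in $\overline{B(0,N)}$, since $\supp \mathcal{F}\omega_N = N\cdot \supp \mathcal{F}\omega \subset \overline{B(0,N)}$. The argument then rests on two pointwise ingredients, which I would combine with the Schwartz decay of $\theta$.

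The first and key ingredient is a sub-mean-value estimate for band-limited functions: there exist $m_0\geq m$ (depending on $r$ and $n$) and a constant $c>0$ such that
\begin{equation*}
|F(y)|^{r}\leq c\,\eta_{N,m_0}\ast |F|^{r}(y),\qquad y\in\mathbb{R}^{n}.
\end{equation*}
I would establish this by picking $\kappa\in\mathcal{S}$ with $\mathcal{F}\kappa\equiv 1$ on $\overline{B(0,1)}$, so that $F=\kappa_N\ast F$ (reproducing formula for functions with spectrum in $B(0,N)$), and then estimating the resulting convolution either via Jensen's inequality (when $r\geq 1$, using the Schwartz decay of $\kappa$) or via the Peetre maximal function combined with the Hardy--Littlewood maximal operator applied to $|F|^{r}$ (when $0<r<1$). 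The second ingredient is the elementary Peetre-type inequality $1+N|x-z|\leq (1+N|x-y|)(1+N|y-z|)$, which upon raising to the power $-m_0$ gives
\begin{equation*}
\eta_{N,m_0}(y-z)\leq (1+N|x-y|)^{m_0}\,\eta_{N,m_0}(x-z).
\end{equation*}
Combining the two ingredients transfers the center of the convolution from $y$ to $x$:
\begin{equation*}
|F(y)|\leq c\,(1+N|x-y|)^{m_0/r}\bigl(\eta_{N,m_0}\ast |F|^{r}(x)\bigr)^{1/r}.
\end{equation*}

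With this in hand, I would insert the last estimate into $|\theta_N\ast F(x)|\leq \int |\theta_N(x-y)||F(y)|\,dy$ to obtain
\begin{equation*}
|\theta_N\ast F(x)|\leq c\,\bigl(\eta_{N,m_0}\ast |F|^{r}(x)\bigr)^{1/r}\int |\theta_N(x-y)|(1+N|x-y|)^{m_0/r}\,dy.
\end{equation*}
After the change of variables $u=N(x-y)$, the remaining integral equals $\int |\theta(u)|(1+|u|)^{m_0/r}\,du$, which is finite and independent of $x$ and $N$ by the Schwartz decay of $\theta$. Finally, $m_0\geq m$ forces $\eta_{N,m_0}\leq \eta_{N,m}$ pointwise, so the right-hand side is controlled by $(\eta_{N,m}\ast |F|^{r}(x))^{1/r}$, which is the claim.

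The main obstacle is the sub-mean-value estimate in the subcritical range $0<r<1$: the direct Jensen argument breaks down because $t\mapsto t^{r}$ is concave, and one must instead pass through Peetre's maximal function together with the fact that $F$ is an entire function of exponential type, which makes it possible to dominate $|F|^{r}$ pointwise by a local maximal average of itself.
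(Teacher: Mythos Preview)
The paper does not prove this lemma itself; it simply refers to \cite[Lemma~2.2]{D6}. Your outline is the standard Str\"omberg--Torchinsky/Rychkov argument behind that reference and is essentially correct.

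One small refinement: for $0<r<1$ the route ``Peetre maximal function $+$ Hardy--Littlewood maximal operator'' yields $|F(y)|\leq c\,(M(|F|^r)(y))^{1/r}$, not a convolution with $\eta_{N,m_0}$. To obtain the convolution form directly, write $F=\kappa_N\ast F$, split $|F(z)|=|F(z)|^{1-r}|F(z)|^{r}$, and bound $|F(z)|^{1-r}\leq (F^{\ast,a}(y))^{1-r}(1+N|y-z|)^{a(1-r)}$ via the Peetre maximal function $F^{\ast,a}$. Using the Schwartz decay of $\kappa$ this gives
\[
|F(y)|\leq c\,(F^{\ast,a}(y))^{1-r}\,\eta_{N,m_0}\ast |F|^{r}(y),
\]
and after dividing by $(1+N|x-y|)^{a}$ and taking the supremum in $y$ (using your Peetre-type inequality to move the center), the factor $(F^{\ast,a})^{1-r}$ cancels against the left-hand side, provided $F^{\ast,a}(x)<\infty$. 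This finiteness holds for $a$ large (depending on the order of the distribution $g$), and a monotonicity-in-$m$ argument---exactly the one you already invoke with $m_0\geq m$---then pushes the estimate down to any $m>n$. With this adjustment your proof is complete and matches the standard one.
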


The proof of this lemma is given in \cite[Lemma 2.2]{D6}. The following
lemma is from A. Almeida and P. H\"{a}st\"{o} \cite[Lemma 4.7]{AH} (we use
it, since the maximal operator is in general not bounded on $\ell ^{q(\cdot
)}(L^{p(\cdot )})$, see \cite[Example 4.1]{AH}).

\begin{lemma}
\label{Alm-Hastolemma1}Let $p\in \mathcal{P}^{\log }(\mathbb{R}^{n})$ and $%
q\in \mathcal{P}_{0}\left( \mathbb{R}^{n}\right) $\ with\ $\frac{1}{q}\in C_{%
\mathrm{loc}}^{\log }\left( \mathbb{R}^{n}\right) $. For $m>n+c_{\log }(1/q)$%
, there exists $c>0$ such that%
\begin{equation*}
\left\Vert \left( \eta _{v,m}\ast f_{v}\right) _{v}\right\Vert _{\ell
^{q(\cdot )}(L^{p(\cdot )})}\leq c\left\Vert \left( f_{v}\right)
_{v}\right\Vert _{\ell ^{q(\cdot )}(L^{p(\cdot )})}.
\end{equation*}
\end{lemma}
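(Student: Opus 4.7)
The statement is the substitute for the Hardy-Littlewood maximal inequality on $\ell^{q(\cdot)}(L^{p(\cdot)})$ (which fails in general, as noted above). The plan is to pass to the modular version of the inequality, use the local log-H\"older regularity of $1/q$ to commute the variable weights with the convolution in the style of Lemma \ref{DHR-lemma}, and then close up with the standard convolution estimate on $L^{\px}$ for $p\in\PPln$ cited from Lemma 4.6.3 of \cite{DHHR}.

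First I would reduce to the modular form: by homogeneity it suffices to produce $C>0$ such that $\varrho_{\ell^{\qx}(L^{\px})}\bigl((\eta_{v,m}\ast f_v)_v/C\bigr)\le 1$ whenever $\varrho_{\ell^{\qx}(L^{\px})}((f_v)_v)\le 1$. Unpacking the outer infimum, pick near-optimal $\lambda_v>0$ with $\varrho_{\px}(f_v/\lambda_v^{1/\qx})\le 1$ and $\sum_v\lambda_v\le 2$; the target becomes
$$\varrho_{\px}\!\left(\frac{\eta_{v,m}\ast f_v}{(C\lambda_v)^{1/\qx}}\right)\le 1,\qquad v\in\Nzero,$$
with $C$ an absolute constant. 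Setting $g_v:=f_v/\lambda_v^{1/\qx}$, so that $\|g_v\|_{\px}\le 1$, one rewrites
$$\frac{\eta_{v,m}\ast f_v(x)}{(C\lambda_v)^{1/q(x)}}=\frac{1}{C^{1/q(x)}}\int_{\Rn}\eta_{v,m}(x-y)\,\lambda_v^{1/q(y)-1/q(x)}\,g_v(y)\,dy.$$

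The heart of the argument is to commute the weight $\lambda_v^{1/q(y)-1/q(x)}$ with the kernel. Using \eqref{lo-log-Holder} for $1/q$ and arguing as in the proof of Lemma \ref{DHR-lemma}, one obtains the pointwise bound
$$\lambda_v^{1/q(y)-1/q(x)}\,\eta_{v,m}(x-y)\le c\,\eta_{v,m-c_{\log}(1/q)}(x-y),$$
uniformly in $v$ and in $\lambda_v\in(0,2]$; the hypothesis $m>n+c_{\log}(1/q)$ ensures the kernel on the right is still an $L^1$-normalised approximate identity. Substituting back yields
$$\frac{|\eta_{v,m}\ast f_v(x)|}{(C\lambda_v)^{1/q(x)}}\lesssim C^{-1/q^{-}}\,\bigl(\eta_{v,m-c_{\log}(1/q)}\ast|g_v|\bigr)(x),$$
and applying the $L^{\px}$-convolution estimate from \cite{DHHR} (valid because $p\in\PPln$) gives $\|\eta_{v,m}\ast f_v/(C\lambda_v)^{1/\qx}\|_{\px}\le c\,C^{-1/q^{-}}\|g_v\|_{\px}\le c\,C^{-1/q^{-}}$. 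Choosing $C$ large forces each modular to be at most $1$, and summing over $v$ using $\sum_v\lambda_v\le 2$ completes the proof.

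The hard part is the commutation step, because a naive application of log-H\"older regularity to $\lambda_v^{1/q(y)-1/q(x)}$ produces a factor involving $|\log\lambda_v|$ which is not a priori bounded in $v$. The resolution, exactly as in Lemma \ref{DHR-lemma}, is that this log-factor is absorbed into a fixed loss of $c_{\log}(1/q)$ units in the polynomial decay exponent of $\eta_{v,m}$, uniformly over $\lambda_v$ and $v$. This is precisely why the hypothesis $m>n+c_{\log}(1/q)$ appears: it is just enough room to absorb the weight commutation while leaving an $L^1$-kernel behind.
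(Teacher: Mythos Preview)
There is a genuine gap in your commutation step. You claim that
\[
\lambda_v^{1/q(y)-1/q(x)}\,\eta_{v,m}(x-y)\le c\,\eta_{v,m-c_{\log}(1/q)}(x-y)
\]
holds uniformly in $\lambda_v\in(0,2]$, and that this is ``exactly as in Lemma~\ref{DHR-lemma}''. But Lemma~\ref{DHR-lemma} works only because the base $t$ in $t^{-\alpha(x)}$ and the scale $t$ in $\eta_{t,m}$ coincide: the factor $t^{\alpha(y)-\alpha(x)}$ is controlled by $(1+t^{-1}|x-y|)^{c_{\log}(\alpha)}$ precisely through this matching of scales. In your setting $\lambda_v$ has no lower bound in terms of $2^{-v}$. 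Take $v=0$, fix $x,y$ with $1/q(y)<1/q(x)$ (possible whenever $q$ is non-constant), and let $\lambda_0\to 0^+$: the left side blows up while the right side is a fixed number. So the claimed absorption of $|\log\lambda_v|$ into a fixed loss of $c_{\log}(1/q)$ decay units is simply false without a lower bound on $\lambda_v$.

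The paper itself does not prove this lemma (it is quoted from \cite[Lemma~4.7]{AH}), but its proof of the continuous analogue, Lemma~\ref{Al-Ha-Lemma}, shows the correct manoeuvre. One does \emph{not} use the near-optimal $\lambda_v$; instead one proves the termwise inequality
\[
\big\||c\,\eta_{v,m}\ast f_v|^{q(\cdot)}\big\|_{\frac{p(\cdot)}{q(\cdot)}}\le \big\||f_v|^{q(\cdot)}\big\|_{\frac{p(\cdot)}{q(\cdot)}}+2^{-v}=:\delta_v,
\]
which is equivalent to $\|c\,\delta_v^{-1/q(\cdot)}(\eta_{v,m}\ast f_v)\|_{p(\cdot)}\le 1$. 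Now $\delta_v\in(2^{-v},1+2^{-v}]$, so $\delta_v^{-1}\le 2^{v}$, and the commutation of $\delta_v^{-1/q(\cdot)}$ with $\eta_{v,m}$ reduces honestly to Lemma~\ref{DHR-lemma} (with base and kernel scale both $\approx 2^{-v}$), after which the $L^{p(\cdot)}$-convolution estimate applies. Summing in $v$, the extra $2^{-v}$ contributes a harmless geometric tail. The added slack $2^{-v}$ is exactly the missing idea in your argument.
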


\begin{lemma}
\label{Al-Ha-Lemma 1}Let $0<\alpha <\beta <\infty ,p\in \mathcal{P}^{\log
}\left( \mathbb{R}^{n}\right) \ $and $q\in \mathcal{P}\left( \mathbb{R}%
^{n}\right) $\ with\ $\frac{1}{q}\in C_{\mathrm{loc}}^{\log }\left( \mathbb{R%
}^{n}\right) $. Let 
\begin{equation*}
g_{t}(x):=\int_{\alpha t}^{\beta t}\eta _{\tau ,m}\ast f_{\tau }(x)\frac{%
d\tau }{\tau },\quad t\in (0,1],x\in \mathbb{R}^{n}.
\end{equation*}%
$\mathrm{(i)}$ Assume that $0<\beta t\leq 1$. The inequality%
\begin{equation*}
\big\||cg_{t}|^{q(\cdot )}\big\|_{\frac{p(\cdot )}{q(\cdot )}}\leq
\int_{\alpha t}^{\beta t}\big\||f_{\tau }|^{q(\cdot )}\big\|_{\frac{p(\cdot )%
}{q(\cdot )}}\frac{d\tau }{\tau }+t,\quad t\in (0,1]
\end{equation*}%
holds for every sequence of functions $(f_{t})_{0<t\leq 1}$ and constant $%
m>n+c_{\log }(\frac{1}{q})$ such that the first term on right-hand side is
at most one, where the constant $c$ independent of $t$.\newline
$\mathrm{(ii)}$ The inequality 
\begin{equation*}
\big\|(g_{t})_{0<t\leq 1}\big\|_{\widetilde{\ell ^{q(\cdot )}(L^{p\left(
\cdot \right) })}}\leq c\big\|(f_{t})_{0<t\leq 1}\big\|_{\widetilde{\ell
^{q(\cdot )}(L^{p\left( \cdot \right) })}}
\end{equation*}%
holds for every sequence of functions $(f_{t})_{0<t\leq 1}$ and constant $%
m>n+c_{\log }(\frac{1}{q})$ such that the right-hand side is finite.
\end{lemma}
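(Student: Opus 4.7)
My plan is to mimic the proof of Lemma~\ref{Al-Ha-Lemma}, adapted to handle the integration over $\tau\in[\alpha t,\beta t]$. For part~(i), I will set
\[
\delta:=\int_{\alpha t}^{\beta t}\||f_{\tau}|^{q(\cdot)}\|_{p(\cdot)/q(\cdot)}\,\frac{d\tau}{\tau}+t,
\]
so that the hypothesis on the first term together with the additive $t$ forces $\delta\in(t,1+t]\subset(0,2]$. The claim is then equivalent to $\|c\,\delta^{-1/q(\cdot)}g_{t}\|_{p(\cdot)}\leq 1$ for a suitable $c$. The key step is to move the factor $\delta^{-1/q(x)}$ inside each convolution $\eta_{\tau,m}\ast f_{\tau}$ via an analog of Lemma~\ref{DHR-lemma} with $\delta$ replacing the scale $t^{\alpha(\cdot)}$: since $\delta\geq\tau/\beta$ and $\tau\leq\beta t\leq 1$, the log-H\"older regularity of $1/q$ gives, for $R\gtrsim c_{\log}(1/q)$,
\[
\delta^{-1/q(x)}\eta_{\tau,m}(x-y)\leq c\,\eta_{\tau,m-R}(x-y)\,\delta^{-1/q(y)}.
\]
Applying this pointwise inside the $\tau$-integral reduces the estimate to
\[
c\,\delta^{-1/q(x)}|g_{t}(x)|\leq c'\int_{\alpha t}^{\beta t}\eta_{\tau,m-R}\ast\bigl(\delta^{-1/q(\cdot)}|f_{\tau}|\bigr)(x)\,\frac{d\tau}{\tau}.
\]

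I then take the $L^{p(\cdot)}$-norm, invoking Minkowski's integral inequality (available since $p^{-}\geq 1$) and the $L^{p(\cdot)}$-boundedness of convolution with the radially decreasing kernel $\eta_{\tau,m-R}$ (valid for $m-R>n$, recalled in the introduction). Setting $\lambda_{\tau}:=\||f_{\tau}|^{q(\cdot)}\|_{p(\cdot)/q(\cdot)}$ and using both the characterization $\|\lambda_{\tau}^{-1/q(\cdot)}f_{\tau}\|_{p(\cdot)}=1$ and the variable-exponent scaling bound $\|K^{1/q(\cdot)}h\|_{p(\cdot)}\leq K^{1/q^{-}}\|h\|_{p(\cdot)}$ for $K\geq 1$, one obtains $\|\delta^{-1/q(\cdot)}f_{\tau}\|_{p(\cdot)}\leq\max\{1,(\lambda_{\tau}/\delta)^{1/q^{-}}\}$. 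Integration over $\tau$, combined with Jensen's inequality applied to $y\mapsto y^{1/q^{-}}$ and the trivial bound $\int_{\alpha t}^{\beta t}\lambda_{\tau}\,d\tau/\tau\leq\delta$, then yields a constant depending only on $\log(\beta/\alpha)$ and $q^{-}$, which is absorbed into $c$.

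For part~(ii), by homogeneity I normalize $\|(f_{t})_{0<t\leq1}\|_{\widetilde{\ell^{q(\cdot)}(L^{p(\cdot)})}}=1$, so $\int_{0}^{1}\lambda_{\tau}\,d\tau/\tau\leq 1$. Part~(i) then applies for each $t\in(0,1]$ (extending $f_{\tau}$ by zero for $\tau>1$ if necessary), giving
\[
\||cg_{t}|^{q(\cdot)}\|_{p(\cdot)/q(\cdot)}\leq\int_{\alpha t}^{\min(\beta t,1)}\lambda_{\tau}\,\frac{d\tau}{\tau}+t.
\]
Integrating in $t$ against $dt/t$ and applying Fubini,
\[
\int_{0}^{1}\int_{\alpha t}^{\beta t}\lambda_{\tau}\,\frac{d\tau}{\tau}\,\frac{dt}{t}\leq\log(\beta/\alpha)\int_{0}^{1}\lambda_{\tau}\,\frac{d\tau}{\tau}\leq\log(\beta/\alpha),
\]
and $\int_{0}^{1}t\,dt/t=1$, so $\varrho_{\widetilde{\ell^{q(\cdot)}(L^{p(\cdot)})}}(cg_{t})\leq C_{0}:=\log(\beta/\alpha)+1$. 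Converting this modular bound to a quasi-norm via $\varrho((cg_{t})/K)\leq K^{-q^{-}}\varrho(cg_{t})$ for $K\geq 1$, with the choice $K=C_{0}^{1/q^{-}}$, yields $\|(g_{t})\|_{\widetilde{\ell^{q(\cdot)}(L^{p(\cdot)})}}\lesssim\|(f_{t})\|_{\widetilde{\ell^{q(\cdot)}(L^{p(\cdot)})}}$.

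The main obstacle will be the variable-exponent scale-shift (the adapted Lemma~\ref{DHR-lemma}) with $\delta$ in place of the dyadic scale: one must verify that $\log(1/\delta)/\log(e+1/|x-y|)\lesssim\log(1+|x-y|/\tau)+1$ through a case analysis leveraging both $\delta\gtrsim\tau$ and the log-H\"older regularity of $1/q$. A related subtlety is the careful tracking of variable-exponent scalings in the final step that compares the $L^{p(\cdot)}$ norm of $\delta^{-1/q(\cdot)}f_{\tau}$ with the $L^{p(\cdot)/q(\cdot)}$ norm of $|f_{\tau}|^{q(\cdot)}$ through the parameter $\lambda_{\tau}$, particularly when $q^{-}$ is close to zero.
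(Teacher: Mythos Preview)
Your proposal is correct and follows essentially the same approach as the paper: define $\delta$, reformulate (i) as an $L^{p(\cdot)}$-bound on $\delta^{-1/q(\cdot)}g_t$, move $\delta^{-1/q(\cdot)}$ inside the convolution via the log-H\"older regularity of $1/q$ (using $\delta\in(t,1+t]$), apply the $L^{p(\cdot)}$-boundedness of convolution with $\eta_{\tau,m-R}$, and then control $\|\delta^{-1/q(\cdot)}f_\tau\|_{p(\cdot)}$ in terms of $\lambda_\tau/\delta$. The only cosmetic differences are that in (i) the paper phrases the last step through Lemma~\ref{DZW} and a split into the sets $\{\||\delta^{-1/q(\cdot)}f_\tau|^{q(\cdot)}\|_{p(\cdot)/q(\cdot)}\gtrless 1\}$ rather than your direct scaling bound, and in (ii) the paper appeals to the Hardy-type Lemma~\ref{Hardy-inequality1} where you use Fubini directly---your Fubini argument is in fact the more transparent route for this short-range integral.
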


\begin{proof}
First let us prove (i). The claim can be reformulated as showing that%
\begin{equation*}
J:=\big\|c_{1}\text{ }\delta ^{-\frac{1}{q(\cdot )}}g_{t}\big\|_{p(\cdot
)}\leq 2^{1-\frac{1}{q^{-}}}+\ln \frac{\beta }{\alpha },\quad t\in (0,1],
\end{equation*}%
where $c_{1}>0$ and $\delta :=\int_{\alpha t}^{\beta t}\big\||f_{\tau
}|^{q(\cdot )}\big\|_{\frac{p(\cdot )}{q(\cdot )}}\frac{d\tau }{\tau }+t$.\
Applying Lemma \ref{DHR-lemma}, with an appropriate choice of $c_{1}$, we get%
\begin{eqnarray*}
J &\leq &\int_{\alpha t}^{\beta t}\big\|c_{1}\text{ }\delta ^{-\frac{1}{%
q(\cdot )}}(\eta _{\tau ,m}\ast f_{\tau })\big\|_{p(\cdot )}\frac{d\tau }{%
\tau } \\
&\leq &\int_{\alpha t}^{\beta t}\big\|\eta _{\tau ,m-c_{\log }(\frac{1}{q}%
)}\ast c_{1}\text{ }\delta ^{-\frac{1}{q(\cdot )}}|f_{\tau }|\big\|_{p(\cdot
)}\frac{d\tau }{\tau },\quad m>n+c_{\log }\big(\frac{1}{q}\big) \\
&\leq &\int_{\alpha t}^{\beta t}\big\|\delta ^{-\frac{1}{q(\cdot )}}f_{\tau }%
\big\|_{p(\cdot )}\frac{d\tau }{\tau },
\end{eqnarray*}%
since $\delta \in (t,1+t]$ and that the convolution with a radially
decreasing $L^{1}$-function is bounded in $L^{p(\cdot )}$, since $%
m>n+c_{\log }(\frac{1}{q})$. Write%
\begin{eqnarray*}
\int_{\alpha t}^{\beta t}\big\|\delta ^{-\frac{1}{q(\cdot )}}f_{\tau }\big\|%
_{p(\cdot )}\frac{d\tau }{\tau } &=&\int_{(\alpha t,\beta t]\cap B}\cdot
\cdot \cdot \frac{d\tau }{\tau }+\int_{(\alpha t,\beta t]\cap B^{c}}\cdot
\cdot \cdot \frac{d\tau }{\tau } \\
&=&J_{1,t}+J_{2,t},
\end{eqnarray*}%
where%
\begin{equation*}
B:=\big\{\tau >0:\big\||\delta ^{-\frac{1}{q(\cdot )}}f_{\tau }|^{q(\cdot )}%
\big\|_{\frac{p(\cdot )}{q(\cdot )}}\geq 1\big\}.
\end{equation*}%
By Lemma \ref{DZW}, 
\begin{equation*}
J_{1,t}\leq \int_{(\alpha t,\beta t]\cap B}\big\||\delta ^{-\frac{1}{q(\cdot
)}}f_{\tau }|^{q(\cdot )}\big\|_{\frac{p(\cdot )}{q(\cdot )}}^{\frac{1}{q^{-}%
}}\frac{d\tau }{\tau }\leq 2^{1-\frac{1}{q^{-}}}\delta ^{-1}\int_{\alpha
t}^{\beta t}\big\||f_{\tau }|^{q(\cdot )}\big\|_{\frac{p(\cdot )}{q(\cdot )}}%
\frac{d\tau }{\tau }\leq 2^{1-\frac{1}{q^{-}}}
\end{equation*}%
and%
\begin{equation*}
J_{2,t}\leq \int_{\alpha t}^{\beta t}\big\|\delta ^{-\frac{1}{q(\cdot )}%
}f_{\tau }\big\|_{p(\cdot )}\frac{d\tau }{\tau }\leq \int_{\alpha t}^{\beta
t}\frac{d\tau }{\tau }=\ln \frac{\beta }{\alpha }.
\end{equation*}%
Now we prove (ii). By the scaling argument, it suffices to consider the case%
\begin{equation*}
\big\|(f_{t})_{0<t\leq 1}\big\|_{\widetilde{\ell ^{q(\cdot )}(L^{p\left(
\cdot \right) })}}=1
\end{equation*}%
and show that the modular of $f$ on the left-hand side is bounded. In
particular, we show that%
\begin{equation*}
\int_{0}^{1}\big\||cg_{t}|^{q(\cdot )}\big\|_{\frac{p(\cdot )}{q(\cdot )}}%
\frac{dt}{t}\leq 2
\end{equation*}%
for some positive constant $c$. Applying Hardy inequality, see Lemma \ref%
{Hardy-inequality1} and the property (i) we obtain the desired result.
\end{proof}

\begin{lemma}
\label{Main-Lemma 2}Let $0<r<\infty \ $and $m>\max (n,\frac{n}{r})$. Let $\{%
\mathcal{F}\Phi ,\mathcal{F}\varphi \}$ be a resolution of unity: 
\begin{equation*}
\mathcal{F}\Phi (\xi )+\int_{0}^{1}\mathcal{F}\varphi (t\xi )\frac{dt}{t}%
=1,\quad \xi \in \mathbb{R}^{n}.
\end{equation*}%
$\mathrm{(i)}$ Let $\theta \in \mathcal{S}(\mathbb{R}^{n})$ be such that $%
\mathrm{supp}\,\mathcal{F}\theta \subset \{\xi \in \mathbb{R}^{n}:\left\vert
\xi \right\vert \leq 2\}$. There exists a constant $c>0$\ such that%
\begin{equation*}
|\theta \ast f|^{r}\leq c\text{ }\eta _{1,mr}\ast |\Phi \ast f|^{r}+c\text{ }%
\int_{1/4}^{1}\eta _{1,mr}\ast |\varphi _{\tau }\ast f|^{r}\frac{d\tau }{%
\tau }
\end{equation*}%
for any $f\in \mathcal{S}^{\prime }(\mathbb{R}^{n})$, where $\varphi _{\tau
}=\tau ^{-n}\varphi (\frac{\cdot }{\tau })$.\newline
$\mathrm{(ii)}$ Let $\omega \in \mathcal{S}(\mathbb{R}^{n})$ be such that $%
\mathrm{supp}\,\mathcal{F}\omega \subset \{\xi \in \mathbb{R}^{n}:\frac{1}{2}%
\leq \left\vert \xi \right\vert \leq 2\}$. There exists a constant $c>0$\
such that%
\begin{equation*}
|\omega _{t}\ast f|^{r}\leq c\text{ }\eta _{1,mr}\ast |\Phi \ast
f|^{r}+c\int_{t/4}^{\min (1,4t)}\eta _{\tau ,mr}\ast |\varphi _{\tau }\ast
f|^{r}\frac{d\tau }{\tau }
\end{equation*}%
for any $f\in \mathcal{S}^{\prime }(\mathbb{R}^{n})$ and any $0<t\leq 1$,
where $\omega _{t}=t^{-n}\omega (\frac{\cdot }{t})$.
\end{lemma}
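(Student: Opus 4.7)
The approach is to combine the continuous Calder\'on reproducing formula with the $r$-trick of Lemma~\ref{r-trick}. Applied to $f\in\mathcal{S}'$, the reproducing identity yields
\[
f=\Phi\ast f+\int_{0}^{1}\varphi_{\tau}\ast f\,\frac{d\tau}{\tau},
\]
and convolving with $\theta$ (respectively $\omega_{t}$) reduces the problem to a pointwise estimate on each resulting summand. The first step is to truncate the $\tau$-integral using Fourier support disjointness: assuming the standard resolution-of-unity setup in which $\mathrm{supp}\,\mathcal{F}\varphi$ is contained in the annulus $\{\tfrac{1}{2}\le|\xi|\le 2\}$, one finds that $\theta\ast\varphi_{\tau}$ vanishes for $\tau<1/4$ in part (i), while in part (ii) $\omega_{t}\ast\Phi=0$ for $t<1/2$ and $\omega_{t}\ast\varphi_{\tau}=0$ for $\tau\notin[t/4,\min(1,4t)]$. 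This accounts for the integration ranges appearing in the conclusion.

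For each surviving summand I apply Lemma~\ref{r-trick} with a scaling matched to its Fourier support. For $\theta\ast\Phi\ast f$, I take $N=1$ and $\omega=\Phi$ (with the exponent in Lemma~\ref{r-trick} set to $mr$, admissible since $mr>n$) to obtain $|\theta\ast\Phi\ast f(x)|^{r}\le c\,\eta_{1,mr}\ast|\Phi\ast f|^{r}(x)$. For $\theta\ast\varphi_{\tau}\ast f$ I take $N=2/\tau$ together with a fixed Schwartz function $\tilde\omega$ satisfying $\mathrm{supp}\,\mathcal{F}\tilde\omega\subset\overline{B(0,1)}$ and $\tilde\omega_{N}=\varphi_{\tau}$; Lemma~\ref{r-trick}, whose $\theta$-slot is occupied by a rescaling of our $\theta$ (respectively $\omega_{t}$) whose Schwartz seminorms remain uniformly bounded as $\tau$ varies in the relevant range, yields $|\theta\ast\varphi_{\tau}\ast f(x)|^{r}\le c\,\eta_{N,mr}\ast|\varphi_{\tau}\ast f|^{r}(x)$. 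In part (i), since $N\in[2,8]$, the elementary comparison $\eta_{N,mr}\le N^{n}\eta_{1,mr}$ valid for $N\ge 1$ upgrades this to $c\,\eta_{1,mr}\ast|\varphi_{\tau}\ast f|^{r}(x)$ uniformly in $\tau$; in part (ii), the analogous comparison $\eta_{N,mr}\le 2^{n}\eta_{\tau,mr}$ produces the weight $\eta_{\tau,mr}$ appearing in the stated inequality, which cannot be replaced by $\eta_{1,mr}$ because $\tau$ may be arbitrarily small.

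Finally, the pieces are assembled via the quasi-triangle inequality $(a+b)^{r}\le 2^{\max(r-1,0)}(a^{r}+b^{r})$, and the $r$-th power has to be moved inside the $\tau$-integral. For $r\ge 1$, this step is Jensen's inequality applied to the probability measure $(\ln 4)^{-1}\,d\tau/\tau$ in part (i) (respectively $(\ln 16)^{-1}\,d\tau/\tau$ in part (ii)) on the fixed log-length integration range, producing $\bigl(\int h\,d\tau/\tau\bigr)^{r}\le c\int h^{r}\,d\tau/\tau$ and finishing the argument. The main obstacle is the case $0<r<1$, where this Jensen-type inequality fails: my plan is to dyadically split the integration range into finitely many sub-intervals of log-length $\le\ln 2$, apply the subadditivity $(a+b)^{r}\le a^{r}+b^{r}$ across the resulting finite sum, and bound each short-interval integral separately by invoking Lemma~\ref{r-trick} once more on its Fourier-localized integrand. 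It is precisely this $r<1$ step that forces the stronger hypothesis $m>n/r$, rather than merely $m>n$, in the statement.
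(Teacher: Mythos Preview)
Your reduction via the Calder\'on reproducing formula and the Fourier--support truncation is correct, and for $r\ge 1$ your Jensen/H\"older argument is exactly what the paper does (it dispatches this case in one line). The gap is in the case $0<r<1$.

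Your proposed remedy---split the $\tau$-interval into finitely many dyadic pieces, use $(a+b)^{r}\le a^{r}+b^{r}$ on the finite sum, then ``invoke Lemma~\ref{r-trick} once more on the Fourier-localized integrand''---does not close the argument. Subadditivity handles the \emph{finite} sum of sub-integrals, but each sub-integral
\[
G_{j}(x)=\int_{I_{j}}\omega_{t}\ast\varphi_{\tau}\ast f(x)\,\frac{d\tau}{\tau}
\]
is still a genuine integral in $\tau$, and you need $|G_{j}(x)|^{r}\lesssim\int_{I_{j}}\eta_{\tau,mr}\ast|\varphi_{\tau}\ast f|^{r}(x)\,\tfrac{d\tau}{\tau}$. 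Applying Lemma~\ref{r-trick} pointwise in $\tau$ gives $|\omega_{t}\ast\varphi_{\tau}\ast f|^{r}\lesssim\eta_{\tau,mr}\ast|\varphi_{\tau}\ast f|^{r}$, but this only yields $|G_{j}|\le\int_{I_{j}}(\eta_{\tau,mr}\ast|\varphi_{\tau}\ast f|^{r})^{1/r}\,\tfrac{d\tau}{\tau}$, and raising to the $r$-th power with $r<1$ again goes the wrong way. Applying Lemma~\ref{r-trick} instead to the whole block $G_{j}=\omega_{t}\ast H_{j}$ (with $H_{j}=\int_{I_{j}}\varphi_{\tau}\ast f\,\tfrac{d\tau}{\tau}$) produces $\eta\ast|H_{j}|^{r}$, which is not the quantity in the conclusion and cannot be compared to $\int_{I_{j}}\eta_{\tau,mr}\ast|\varphi_{\tau}\ast f|^{r}\,\tfrac{d\tau}{\tau}$ without the very inequality you are trying to prove.

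The paper handles $r<1$ by a different mechanism: it introduces the Peetre maximal function $\theta_{1}^{\ast,m}f$ (respectively $\omega_{t}^{\ast,m}f$), bounds each integrand simultaneously by the $r$-trick estimate and by this maximal function, and combines the two to obtain the self-improving inequality
\[
\theta_{1}^{\ast,m}f(x)\lesssim\bigl(\theta_{1}^{\ast,m}f(x)\bigr)^{1-r}\Bigl(\eta_{1,mr}\ast|\Phi\ast f|^{r}(x)+\int_{1/4}^{1}\eta_{1,mr}\ast|\varphi_{\tau}\ast f|^{r}(x)\,\tfrac{d\tau}{\tau}\Bigr).
\]
Dividing through by $(\theta_{1}^{\ast,m}f(x))^{1-r}$ when this quantity is finite gives the claim; the a priori finiteness is then secured by the Str\"omberg--Torchinsky bootstrap (first take $m$ large enough that finiteness is automatic, then observe that the right-hand side is monotone in $m$). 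This maximal-function bootstrap is the missing ingredient in your plan.
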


\begin{proof}
We split the proof into two steps. First the case $1\leq r<\infty $ follows
by the H\"{o}lder inequality. \newline
\textit{Step 1.} Proof of (i).\ Since $\{\mathcal{F}\Phi ,\mathcal{F}\varphi
\}$ is a resolutions of unity, it follows that 
\begin{equation*}
\theta \ast f=\Phi \ast \theta \ast f+\int_{1/4}^{1}\theta \ast \varphi
_{\tau }\ast f\frac{d\tau }{\tau }.
\end{equation*}%
First recall the elementary inequality%
\begin{equation*}
d^{n}\eta _{d,m}(y-z)\leq d^{2n}\eta _{d,-m}(y-x)\eta _{d,m}(x-z),\quad
d>0,x,y,z\in \mathbb{R}^{n},
\end{equation*}%
which together with Lemma \ref{r-trick} implies that%
\begin{eqnarray*}
|\Phi \ast \theta \ast f(y)|^{r} &\lesssim &\eta _{1,mr}\ast |\Phi \ast
f|^{r}(y) \\
&=&c\int_{\mathbb{R}^{n}}\eta _{1,mr}(y-z)|\Phi \ast f(z)|^{r}dz \\
&\lesssim &\eta _{1,-mr}(y-x)\eta _{1,mr}\ast |\Phi \ast f|^{r}(x)
\end{eqnarray*}%
for any $x\in \mathbb{R}^{n}$ and any $m>\frac{n}{r}$. Furthermore, 
\begin{eqnarray*}
|\Phi \ast \theta \ast f(y)| &\leq &\int_{\mathbb{R}^{n}}\eta
_{1,N}(y-z)|\theta \ast f(z)|dz \\
&\leq &\eta _{1,-m}(y-x)\theta _{1}^{\ast ,m}f(x)\int_{\mathbb{R}^{n}}\eta
_{1,N-m}(y-z)dz \\
&\lesssim &\eta _{1,-m}(y-x)\theta _{1}^{\ast ,m}f(x)
\end{eqnarray*}%
for any $N>m+n$, where%
\begin{equation*}
\theta _{1}^{\ast ,m}f(x)=\sup_{y\in \mathbb{R}^{n}}\frac{|\theta \ast f(y)|%
}{(1+|y-x|)^{m}},\quad x\in \mathbb{R}^{n}.
\end{equation*}%
Therefore,%
\begin{equation*}
|\Phi \ast \theta \ast f(y)|\lesssim \eta _{1,-m}(y-x)(\theta _{1}^{\ast
,m}f(x))^{1-r}\eta _{1,mr}\ast |\Phi \ast f|^{r}(x)
\end{equation*}%
for any $x\in \mathbb{R}^{n}$ and any $m>n$. Again from Lemma \ref{r-trick}
we conclude%
\begin{eqnarray*}
|\theta \ast \varphi _{\tau }\ast f(y)|^{r} &\lesssim &\eta _{1,mr}\ast
|\varphi _{\tau }\ast f|^{r}(y) \\
&\lesssim &(1+|y-x|)^{mr}\eta _{1,mr}\ast |\varphi _{\tau }\ast f|^{r}(x)
\end{eqnarray*}%
and%
\begin{eqnarray*}
|\theta \ast \varphi _{\tau }\ast f(y)| &\lesssim &\int_{\mathbb{R}^{n}}\eta
_{\tau ,N}(y-z)|\theta \ast f(z)|dz,\quad \frac{1}{4}\leq \tau \leq 1 \\
&\lesssim &(1+|y-x|)^{m}\theta _{1}^{\ast ,m}f(x)
\end{eqnarray*}%
for any $x\in \mathbb{R}^{n}$, any $m>n$ and any $N>m+n$. Consequently%
\begin{equation}
\theta _{1}^{\ast ,m}f(x)\leq c(\theta _{1}^{\ast ,m}f(x))^{1-r}\Big(\eta
_{1,mr}\ast |\Phi \ast f|^{r}(x)+\int_{1/4}^{1}\eta _{1,mr}\ast |\varphi
_{\tau }\ast f|^{r}(x)\frac{d\tau }{\tau }\Big),  \label{est-theta1}
\end{equation}%
which implies that%
\begin{equation}
|\theta \ast f(x)|^{r}\leq c\text{ }\eta _{1,mr}\ast |\Phi \ast
f|^{r}(x)+c\int_{1/4}^{1}\eta _{1,mr}\ast |\varphi _{\tau }\ast f|^{r}(x)%
\frac{d\tau }{\tau }  \label{est-theta}
\end{equation}%
when $\theta _{1}^{\ast ,m}f(x)<\infty $, which is true if $m\geq \frac{n}{r}%
+N_{0}$ (order of distribution). We will use the Str\"{o}mberg and
Torchinsky idea \cite{ST89}. Observe that the right-hand side of %
\eqref{est-theta} decreases as $m$ increases. Therefore, we have %
\eqref{est-theta} for all $m>\frac{n}{r}$ but with $c=c(f)$ depending on $f$%
. We can easily check that if the right-hand side of \eqref{est-theta}, with 
$c=c(f)$, is finite imply that $\theta _{1}^{\ast ,m}f(x)<\infty $,
otherwise, there is nothing to prove. Returning to \eqref{est-theta1} and
having in mind that now $\theta _{1}^{\ast ,m}f(x)<\infty $, we obtain the
desired estimate$\mathrm{\ }$\eqref{est-theta}$\mathrm{.}$

\noindent \textit{Step 2.} Proof of (ii). We have%
\begin{equation*}
\omega _{t}\ast f=\int_{t/4}^{\min (1,4t)}\omega _{t}\ast \varphi _{\tau
}\ast f\frac{d\tau }{\tau }+\left\{ 
\begin{array}{ccc}
0, & \text{if} & 0<t<\frac{1}{4}; \\ 
\omega _{t}\ast \Phi \ast f, & \text{if} & \frac{1}{4}\leq t\leq 1.%
\end{array}%
\right.
\end{equation*}%
Let%
\begin{equation*}
g_{t}(y):=\int_{t/4}^{\min (1,4t)}\omega _{t}\ast \varphi _{\tau }\ast f(y)%
\frac{d\tau }{\tau },\quad y\in \mathbb{R}^{n},0<t\leq 1.
\end{equation*}%
It follows from Lemma \ref{r-trick} that%
\begin{eqnarray*}
|\omega _{t}\ast \varphi _{\tau }\ast f(y)|^{r} &\lesssim &\eta _{t,mr}\ast
|\varphi _{\tau }\ast f|^{r}(y) \\
&\lesssim &\eta _{\tau ,mr}\ast |\varphi _{\tau }\ast f|^{r}(y) \\
&=&c\int_{\mathbb{R}^{n}}\eta _{\tau ,mr}(y-z)|\varphi _{\tau }\ast
f(z)|^{r}dz \\
&\lesssim &(1+\tau ^{-1}|y-x|)^{mr}\eta _{\tau ,mr}\ast |\varphi _{\tau
}\ast f|^{r}(x)
\end{eqnarray*}%
and%
\begin{eqnarray*}
|\omega _{t}\ast \varphi _{\tau }\ast f(y)| &\lesssim &\int_{\mathbb{R}%
^{n}}\eta _{\tau ,N}(y-z)|\omega _{t}\ast f(z)|dz \\
&\lesssim &\omega _{t,m}^{\ast }f(y)\int_{\mathbb{R}^{n}}\eta _{\tau
,N}(y-z)(1+t^{-1}|y-z|)^{m}dz \\
&\lesssim &\omega _{t}^{\ast ,m}f(y) \\
&\lesssim &(1+t^{-1}|y-x|)^{m}\omega _{t}^{\ast ,m}f(x)
\end{eqnarray*}%
for any $x,y\in \mathbb{R}^{n}$, any $t/4\leq \tau \leq \min (1,4t),0<t\leq
1 $ and any $N>m+n$, where 
\begin{equation*}
\omega _{t}^{\ast ,m}f(x)=\sup_{y\in \mathbb{R}^{n}}\frac{|\omega _{t}\ast
f(y)|}{(1+t^{-1}|y-x|)^{m}},\quad x,y\in \mathbb{R}^{n},0<t\leq 1.
\end{equation*}%
Therefore, $|g_{t}(y)|$ can be estimated from above by%
\begin{eqnarray*}
&&c(\omega _{t}^{\ast ,m}f(x))^{1-r}(1+t^{-1}|y-x|)^{m(1-r)} \\
&&\times \int_{t/4}^{\min (1,4t)}(1+\tau ^{-1}|y-x|)^{mr}\eta _{\tau
,mr}\ast |\varphi _{\tau }\ast f|^{r}(x)\frac{d\tau }{\tau } \\
&\lesssim &(1+t^{-1}|y-x|)^{m}(\omega _{t}^{\ast
,m}f(x))^{1-r}\int_{t/4}^{\min (1,4t)}\eta _{\tau ,mr}\ast |\varphi _{\tau
}\ast f|^{r}(x)\frac{d\tau }{\tau },
\end{eqnarray*}%
if $0<t\leq 1$. Now if $\frac{1}{4}\leq t\leq 1$, we easily obtain

\begin{eqnarray*}
|\omega _{t}\ast \Phi \ast f(y)| &=&|\omega _{t}\ast \Phi \ast
f(y)|^{1-r}|\omega _{t}\ast \Phi \ast f(y)|^{r} \\
&\lesssim &(1+t^{-1}|y-x|)^{m(1-r)}(\omega _{t}^{\ast ,m}f(x))^{1-r}\eta
_{1,mr}\ast |\Phi \ast f|^{r}(y) \\
&\lesssim &(1+t^{-1}|y-x|)^{m}(\omega _{t}^{\ast ,m}f(x))^{1-r}\eta
_{1,mr}\ast |\Phi \ast f|^{r}(x),
\end{eqnarray*}%
which yields that%
\begin{equation*}
\sup_{y\in \mathbb{R}^{n}}\frac{|\omega _{t}\ast \Phi \ast f(y)|}{%
(1+t^{-1}|y-x|)^{m}}\lesssim (\omega _{t}^{\ast ,m}f(x))^{1-r}\eta
_{1,mr}\ast |\Phi \ast f|^{r}(x).
\end{equation*}%
Consequently%
\begin{equation*}
|\omega _{t}\ast f(x)|^{r}\lesssim \left( \omega _{t}^{\ast ,m}f(x)\right)
^{r}\lesssim \eta _{1,mr}\ast |\Phi \ast f|^{r}(x)+\int_{t/4}^{\min
(1,4t)}\eta _{\tau ,mr}\ast |\varphi _{\tau }\ast f|^{r}(x)\frac{d\tau }{%
\tau }.
\end{equation*}%
when $\omega _{t}^{\ast ,m}f(x)<\infty ,0<t\leq 1$ and $x\in \mathbb{R}^{n}$%
. Using a combination of the arguments used in (i), we arrive at the desired
estimate. The proof is complete.
\end{proof}

The following lemma is from \cite[Lemma 1]{Ry01}.

\begin{lemma}
\label{Ry-Lemma1} Let $\varrho ,\mu \in \mathcal{S}(\mathbb{R}^{n})$, and $%
M\geq -1$ an integer such that 
\begin{equation*}
\int_{\mathbb{R}^{n}}x^{\alpha }\mu (x)dx=0
\end{equation*}%
for all $\left\vert \alpha \right\vert \leq M$. Then for any $N>0$, there is
a constant $c(N)>0$ such that 
\begin{equation*}
\sup_{z\in \mathbb{R}^{n}}|t^{-n}\mu (t^{-1}\cdot )\ast \varrho
(z)|(1+\left\vert z\right\vert )^{N}\leq c(N)\text{ }t^{M+1},\quad 0<t\leq 1.
\end{equation*}
\end{lemma}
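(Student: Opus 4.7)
The plan is to follow the classical moment-cancellation strategy: expand $\varrho$ in a Taylor polynomial around $z$, kill the polynomial part by the vanishing moments of $\mu$, and control the remainder using the Schwartz decay of the derivatives of $\varrho$ together with a Peetre-type inequality to propagate the decay in $z$. Throughout I work in the relevant regime $0<t\le 1$.

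After the change of variable $w=t^{-1}y$ in the convolution one has
$$t^{-n}\mu(t^{-1}\cdot)\ast\varrho(z)=\int_{\mathbb{R}^n}\mu(w)\,\varrho(z-tw)\,dw.$$
For $M\ge 0$ I would apply Taylor's formula to $\varrho$ around $z$:
$$\varrho(z-tw)=\sum_{|\alpha|\le M}\frac{(-tw)^\alpha}{\alpha!}\partial^\alpha\varrho(z)+(M+1)\!\!\sum_{|\beta|=M+1}\!\frac{(-tw)^\beta}{\beta!}\int_0^1(1-s)^M\partial^\beta\varrho(z-stw)\,ds.$$
The hypothesis $\int w^\alpha \mu(w)\,dw=0$ for $|\alpha|\le M$ annihilates the polynomial part after integration against $\mu$, leaving an $O(t^{M+1})$ quantity of the form
$$t^{M+1}\sum_{|\beta|=M+1}c_\beta\int_0^1(1-s)^M\!\int_{\mathbb{R}^n}\mu(w)\,w^\beta\,\partial^\beta\varrho(z-stw)\,dw\,ds.$$
The degenerate case $M=-1$ is handled by the same computation with the polynomial part absent, so no cancellation is used and one works directly with $\varrho(z-tw)$.

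The decay in $z$ then comes from the Schwartz estimate $|\partial^\beta\varrho(y)|\le C_K(1+|y|)^{-K}$, combined with the Peetre-type inequality
$$(1+|z-stw|)(1+|stw|)\ge 1+|z|,\qquad s\in[0,1],$$
which yields $(1+|z-stw|)^{-K}\le(1+t|w|)^K(1+|z|)^{-K}$. Taking $K\ge N$ and using $t\le 1$, the inner integrand is dominated by $|\mu(w)||w|^{M+1}(1+|w|)^K(1+|z|)^{-N}$. The rapid decrease of $\mu$ (any Schwartz tail of order strictly larger than $n+M+1+K$ suffices) then makes the $w$-integral finite, and collecting the factors gives the asserted bound $c(N)\,t^{M+1}(1+|z|)^{-N}$.

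The main technical point lies in balancing the indices: the Peetre trick introduces a factor $(1+t|w|)^K$ which grows in $w$, and this growth must be absorbed by the Schwartz decay of $\mu$ after further accounting for the polynomial factor $|w|^{M+1}$ produced by the Taylor remainder. Once $N$, $M$ and $n$ are fixed, the Schwartz seminorm of $\mu$ used to estimate $|\mu(w)|$ can be chosen of sufficiently high order, so the argument closes without further delicate manipulation.
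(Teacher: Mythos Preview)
Your argument is correct and is precisely the standard proof of this classical estimate. Note that the paper does not give its own proof here; it simply quotes the result from Rychkov \cite[Lemma~1]{Ry01}, whose argument is exactly the Taylor--expansion--plus--moment--cancellation scheme you have written out: expand $\varrho(z-tw)$ to order $M$ about $z$, cancel the polynomial part via the vanishing moments of $\mu$, and control the integral remainder using Schwartz decay of $\partial^\beta\varrho$ together with the Peetre inequality $(1+|z-stw|)^{-K}\le (1+t|w|)^{K}(1+|z|)^{-K}$. Your restriction to $0<t\le 1$ is appropriate, since in every application within the paper the dilation parameter fed into the lemma is bounded above by a fixed constant, so the step $(1+t|w|)^{K}\le c\,(1+|w|)^{K}$ is harmless; the case $M=-1$ is indeed just the zeroth--order version of the same estimate.
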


\section{Variable\ Besov\ spaces}

In this section we\ present the definition of\ Besov spaces of variable
smoothness and integrability, and prove the basic properties in analogy to
the case of fixed exponents. Select a pair of Schwartz functions $\Phi $ and 
$\varphi $ satisfying 
\begin{equation}
\mathrm{supp}\,\mathcal{F}\Phi \subset \{x\in \mathbb{R}^{n}:\left\vert
x\right\vert \leq 2\},\quad \mathrm{supp}\,\mathcal{F}\varphi \subset \big\{%
x\in \mathbb{R}^{n}:\frac{1}{2}\leq \left\vert x\right\vert \leq 2\big\}
\label{Ass1}
\end{equation}%
and 
\begin{equation}
\mathcal{F}\Phi (\xi )+\int_{0}^{1}\mathcal{F}\varphi (t\xi )\frac{dt}{t}%
=1,\quad \xi \in \mathbb{R}^{n}.  \label{Ass2}
\end{equation}%
Such a resolution \eqref{Ass1} and \eqref{Ass2} of unity can be constructed
as follows. Let $\mu \in \mathcal{S}(\mathbb{R}^{n})$\ be such that $%
\left\vert \mathcal{F}\mu (\xi )\right\vert >0$\ for $1/2<\left\vert \xi
\right\vert <2$. There exists $\eta \in \mathcal{S}(\mathbb{R}^{n})$\ with 
\begin{equation*}
\mathrm{supp}\,\mathcal{F}\eta \subset \big\{x\in \mathbb{R}^{n}:\frac{1}{2}%
<\left\vert x\right\vert <2\big\}
\end{equation*}%
such that 
\begin{equation*}
\int_{0}^{\infty }\mathcal{F}\mu (t\xi )\,\mathcal{F}\eta (t\xi )\frac{dt}{t}%
=1,\quad \xi \neq 0,
\end{equation*}%
see \cite{CaTor75}, \cite{Hei74} and \cite{JaTa81}. We set $\mathcal{F}%
\varphi =\mathcal{F}\mu \,\mathcal{F}\eta $ and 
\begin{equation*}
\mathcal{F}\Phi (\xi )=\left\{ 
\begin{array}{ccc}
\displaystyle{\int_{1}^{\infty }}\mathcal{F}\varphi (t\xi )\,\dfrac{dt}{t} & 
\text{if} & \xi \neq 0, \\ 
1 & \text{if} & \xi =0.%
\end{array}%
\right.
\end{equation*}%
Then $\mathcal{F}\Phi \in \mathcal{S}(\mathbb{R}^{n})$, and as $\mathcal{F}%
\eta $ is supported in $\{x\in \mathbb{R}^{n}:\frac{1}{2}\leq \left\vert
x\right\vert \leq 2\}$, we see that supp $\mathcal{F}\Phi \subset \{x\in 
\mathbb{R}^{n}:\left\vert x\right\vert \leq 2\}$.\newline

Now we define the spaces under consideration.

\begin{definition}
\label{B-F-def}Let $\alpha :\mathbb{R}^{n}\rightarrow \mathbb{R}$\ and\ $%
p,q\in \mathcal{P}_{0}(\mathbb{R}^{n})$. Let $\{\mathcal{F}\Phi ,\mathcal{F}%
\varphi \}$ be a resolution of unity and we put $\varphi _{t}=t^{-n}\varphi (%
\frac{\cdot }{t})$, $0<t\leq 1$. The Besov space $\mathfrak{B}_{p(\cdot
),q(\cdot )}^{\alpha (\cdot )}$\ is the collection of all $f\in \mathcal{S}%
^{\prime }(\mathbb{R}^{n})$\ such that%
\begin{equation*}
\big\|f\big\|_{\mathfrak{B}_{p(\cdot ),q(\cdot )}^{\alpha (\cdot )}}^{\Phi
,\varphi }:=\big\|\Phi \ast f\big\|_{p(\cdot )}+\big\|(t^{-\alpha (\cdot
)}\varphi _{t}\ast f)_{0<t\leq 1}\big\|_{\widetilde{\ell ^{q(\cdot
)}(L^{p\left( \cdot \right) })}}<\infty .
\end{equation*}
\end{definition}

When $q=\infty $,\ the Besov space $\mathfrak{B}_{p(\cdot ),\infty }^{\alpha
(\cdot )}$\ consist of all distributions $f\in \mathcal{S}^{\prime }(\mathbb{%
R}^{n})$\ such that 
\begin{equation*}
\big\|f\big\|_{\mathfrak{B}_{p(\cdot ),\infty }^{\alpha (\cdot )}}^{\Phi
,\varphi }:=\big\|\Phi \ast f\big\|_{p(\cdot )}+\sup_{t\in (0,1]}\big\|%
t^{-\alpha (\cdot )}(\varphi _{t}\ast f)\big\|_{p(\cdot )}<\infty .
\end{equation*}%
One recognizes immediately that $\mathfrak{B}_{p(\cdot ),q(\cdot )}^{\alpha
(\cdot )}$ is a quasi-normed space and if $\alpha $, $p$ and $q$ are
constants, then 
\begin{equation*}
\mathfrak{B}_{p(\cdot ),q(\cdot )}^{\alpha (\cdot )}=B_{p,q}^{\alpha },
\end{equation*}%
where $B_{p,q}^{\alpha }$ is the usual Besov spaces.

Now, we are ready to show that the definition of these\ function spaces is
independent of the chosen resolution $\{\mathcal{F}\Phi ,\mathcal{F}\varphi
\}$ of unity. This justifies our omission of the subscript $\Phi $ and $%
\varphi $ in the sequel.

\begin{theorem}
\label{Independent}Let $\{\mathcal{F}\Phi ,\mathcal{F}\varphi \}$ and $%
\left\{ \mathcal{F}\Psi ,\mathcal{F}\psi \right\} $ be two resolutions of
unity. Let $\alpha :\mathbb{R}^{n}\rightarrow \mathbb{R}$\ and\ $p,q\in 
\mathcal{P}_{0}(\mathbb{R}^{n})$. Assume that\ $p\in \mathcal{P}_{0}^{\log
}\left( \mathbb{R}^{n}\right) $ and $\alpha ,\frac{1}{q}\in C_{\mathrm{loc}%
}^{\log }(\mathbb{R}^{n})$. Then 
\begin{equation*}
\big\|f\big\|_{\mathfrak{B}_{p(\cdot ),q(\cdot )}^{\alpha (\cdot )}}^{\Phi
,\varphi }\approx \big\|f\big\|_{\mathfrak{B}_{p(\cdot ),q(\cdot )}^{\alpha
(\cdot )}}^{\Psi ,\psi }.
\end{equation*}
\end{theorem}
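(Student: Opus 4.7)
The plan is to transfer the pointwise estimates provided by Lemma~\ref{Main-Lemma 2} into norm estimates using Lemma~\ref{DHR-lemma} and Lemma~\ref{Al-Ha-Lemma 1}. By symmetry it suffices to prove one direction, say $\|f\|^{\Phi,\varphi}_{\mathfrak{B}^{\alpha(\cdot)}_{p(\cdot),q(\cdot)}} \lesssim \|f\|^{\Psi,\psi}_{\mathfrak{B}^{\alpha(\cdot)}_{p(\cdot),q(\cdot)}}$. I would first fix an auxiliary parameter $0<r<\min(p^{-},q^{-},1)$, so that $p(\cdot)/r \in \mathcal{P}^{\log}(\mathbb{R}^{n})$ and $1/(q(\cdot)/r) \in C^{\log}_{\mathrm{loc}}(\mathbb{R}^{n})$, and take $m$ large enough so that $(m-R)r > n+c_{\log}(1/q)$ for some $R \geq c_{\log}(\alpha)$. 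This guarantees that Lemma~\ref{DHR-lemma} and Lemma~\ref{Al-Ha-Lemma 1}(ii) can be applied with the rescaled exponents $p(\cdot)/r,\,q(\cdot)/r$.

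For the low-frequency term $\|\Phi\ast f\|_{p(\cdot)}$, I would apply Lemma~\ref{Main-Lemma 2}(i) with $\theta=\Phi$ relative to the resolution $\{\mathcal{F}\Psi,\mathcal{F}\psi\}$, take the $L^{p(\cdot)/r}$-quasinorm of both sides, use the boundedness of convolution with the radially decreasing $L^{1}$-function $\eta_{1,mr}$ on $L^{p(\cdot)/r}$, apply Minkowski's integral inequality on the bounded interval and raise to $1/r$ to get
\[
\|\Phi\ast f\|_{p(\cdot)} \lesssim \|\Psi\ast f\|_{p(\cdot)} + \Bigl(\int_{1/4}^{1}\|\psi_{\tau}\ast f\|^{r}_{p(\cdot)}\,\frac{d\tau}{\tau}\Bigr)^{1/r}.
\]
Since $\tau\in[1/4,1]$ and $\alpha$ is bounded, the last factor is equivalent to the same expression with $\tau^{-\alpha(\cdot)}\psi_{\tau}\ast f$ in place of $\psi_{\tau}\ast f$, which in turn is controlled by $\|(\tau^{-\alpha(\cdot)}\psi_{\tau}\ast f)_{0<\tau\leq 1}\|_{\widetilde{\ell^{q(\cdot)}(L^{p(\cdot)})}}$ through a bounded-range Hölder-type embedding that is valid because $r\leq q^{-}$.

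For the main term I would apply Lemma~\ref{Main-Lemma 2}(ii) with $\omega=\varphi$ relative to $\{\mathcal{F}\Psi,\mathcal{F}\psi\}$, multiply by $t^{-r\alpha(x)}$, and exploit $\tau\asymp t$ on the integration interval to exchange $t^{-r\alpha(x)}$ for $\tau^{-r\alpha(x)}$; Lemma~\ref{DHR-lemma} then allows me to push the factor $\tau^{-r\alpha}$ inside the convolution. This produces
\[
|t^{-\alpha(\cdot)}\varphi_{t}\ast f|^{r} \lesssim \chi_{[1/4,1]}(t)\,t^{-r\alpha(\cdot)}\eta_{1,mr}\ast|\Psi\ast f|^{r} + \int_{t/4}^{\min(1,4t)} \eta_{\tau,(m-R)r}\ast |G_{\tau}|^{r}\,\frac{d\tau}{\tau},
\]
where $G_{\tau}:=\tau^{-\alpha(\cdot)}\psi_{\tau}\ast f$ and the cut-off $\chi_{[1/4,1]}(t)$ is justified because $\varphi_{t}\ast\Psi\ast f=0$ for $t<1/4$ by disjointness of the Fourier supports. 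The key manipulation on the integral piece is the identity $\|(H_{t})\|^{r}_{\widetilde{\ell^{q(\cdot)}(L^{p(\cdot)})}} = \||H_{t}|^{r}\|_{\widetilde{\ell^{q(\cdot)/r}(L^{p(\cdot)/r})}}$, which is a direct consequence of $\varrho_{p(\cdot)/r}(|f|^{r})=\varrho_{p(\cdot)}(f)$ together with homogeneity. Raising to the $r$-th power thereby reduces the integral term to Lemma~\ref{Al-Ha-Lemma 1}(ii) applied with exponents $p(\cdot)/r$ and $q(\cdot)/r$, giving the bound $\|(G_{\tau})\|^{r}_{\widetilde{\ell^{q(\cdot)}(L^{p(\cdot)})}}$. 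The $\chi_{[1/4,1]}$-piece is handled by the same bounded-range embedding as in the previous paragraph, contributing $\|\Psi\ast f\|_{p(\cdot)}$.

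The step I expect to be the main obstacle is the bounded-range embedding used twice above: showing that $\|\chi_{[1/4,1]}(t)h(\cdot)\|_{\widetilde{\ell^{q(\cdot)}(L^{p(\cdot)})}} \lesssim \|h\|_{p(\cdot)}$ for any $h\in L^{p(\cdot)}$ and, dually, $\bigl(\int_{1/4}^{1}\|F_{\tau}\|^{r}_{p(\cdot)}\,d\tau/\tau\bigr)^{1/r} \lesssim \|(F_{\tau})_{0<\tau\leq 1}\|_{\widetilde{\ell^{q(\cdot)}(L^{p(\cdot)})}}$ when $r\leq q^{-}$. Both are elementary but slightly delicate computations with the Luxemburg-type modular: one realises the infimum over $\lambda_{t}$ by a constant value on the bounded interval $[1/4,1]$ and uses that $\int_{1/4}^{1}d\tau/\tau=\ln 4<\infty$, together with the standard rescaling properties of the variable-exponent norm. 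Combining everything gives $\|f\|^{\Phi,\varphi}_{\mathfrak{B}^{\alpha(\cdot)}_{p(\cdot),q(\cdot)}} \lesssim \|f\|^{\Psi,\psi}_{\mathfrak{B}^{\alpha(\cdot)}_{p(\cdot),q(\cdot)}}$, and interchanging the roles of the two resolutions closes the proof.
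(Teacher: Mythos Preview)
Your proposal is correct and rests on the same ingredients as the paper's proof (Lemmas~\ref{DHR-lemma}, \ref{Al-Ha-Lemma 1} and \ref{Main-Lemma 2}), but the organization is genuinely different. The paper first invokes Lemma~\ref{Main-Lemma 2} only to \emph{reduce} to the case $p\in\mathcal{P}^{\log}(\mathbb{R}^{n})$, then normalizes $\|f\|^{\Psi,\psi}=1$ and works entirely at the \emph{modular} level: it writes out the decompositions $\Phi\ast f=\Phi\ast\Psi\ast f+\int_{1/4}^{1}\Phi\ast\psi_{\tau}\ast f\,\frac{d\tau}{\tau}$ and the analogous one for $\varphi_{t}\ast f$, and then applies Lemma~\ref{Al-Ha-Lemma 1}(i) (the modular estimate with the additive $+t$) directly to bound each piece of the modular by something $\leq 1$. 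Your route keeps the $r$-trick explicit throughout, appeals to Lemma~\ref{Main-Lemma 2} for the pointwise $r$-th power inequality, passes to the rescaled exponents $p(\cdot)/r,\,q(\cdot)/r$ via the homogeneity identity $\|(H_{t})\|^{r}_{\widetilde{\ell^{q(\cdot)}(L^{p(\cdot)})}}=\|(|H_{t}|^{r})\|_{\widetilde{\ell^{q(\cdot)/r}(L^{p(\cdot)/r})}}$, and finishes with Lemma~\ref{Al-Ha-Lemma 1}(ii) at the norm level together with your two bounded-range embeddings. The paper's version is shorter because the modular/scaling argument absorbs both of your bounded-range steps into Lemma~\ref{Al-Ha-Lemma 1}(i); your version is more transparent about where the constraint $r<\min(p^{-},q^{-})$ enters and avoids the slightly opaque ``$+t$'' device. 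One small point: the cut-off $\chi_{[1/4,1]}(t)$ on the $\Psi$-term is not part of the \emph{statement} of Lemma~\ref{Main-Lemma 2}(ii) but comes from its proof (or from the explicit decomposition, as the paper does); you have identified the correct reason, just be aware you are using a refinement of the lemma as stated.
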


\begin{proof}
It is sufficient to show that there exists a constant $c>0$ such that for
all $f\in \mathfrak{B}_{p(\cdot ),q(\cdot )}^{\alpha (\cdot )}$ we have 
\begin{equation*}
\big\|f\big\|_{\mathfrak{B}_{p(\cdot ),q(\cdot )}^{\alpha (\cdot )}}^{\Phi
,\varphi }\lesssim \big\|f\big\|_{\mathfrak{B}_{p(\cdot ),q(\cdot )}^{\alpha
(\cdot )}}^{\Psi ,\psi }.
\end{equation*}%
In view of Lemma \ref{Main-Lemma 2} the problem can be reduced to the case
of $p\in \mathcal{P}^{\log }\left( \mathbb{R}^{n}\right) $ and $q\in 
\mathcal{P}(\mathbb{R}^{n})$ with $\frac{1}{q}\in C_{\mathrm{loc}}^{\log }(%
\mathbb{R}^{n})$. By the scaling argument, it suffices to consider the case $%
\big\|f\big\|_{\mathfrak{B}_{p(\cdot ),q(\cdot )}^{\alpha (\cdot )}}^{\Psi
,\psi }=1$ and show that%
\begin{equation*}
\big\|\Phi \ast f\big\|_{p(\cdot )}\lesssim 1
\end{equation*}%
and%
\begin{equation*}
\int_{0}^{1}\big\||c\text{ }t^{-\alpha (\cdot )}(\varphi _{t}\ast
f)|^{q(\cdot )}\big\|_{\frac{p(\cdot )}{q(\cdot )}}\frac{dt}{t}\leq 1
\end{equation*}%
for some positive constant $c$. Interchanging the roles of $\left( \Psi
,\psi \right) $ and $\left( \Phi ,\varphi \right) $ we obtain the desired
result. We have%
\begin{equation*}
\mathcal{F}\Phi (\xi )=\mathcal{F}\Phi (\xi )\mathcal{F}\Psi (\xi
)+\int_{1/4}^{1}\mathcal{F}\Phi (\xi )\mathcal{F}\psi (\tau \xi )\frac{d\tau 
}{\tau }
\end{equation*}%
and%
\begin{equation*}
\mathcal{F}\varphi (t\xi )=\int_{t/4}^{\min (1,4t)}\mathcal{F}\varphi (t\xi )%
\mathcal{F}\psi (\tau \xi )\frac{d\tau }{\tau }+\left\{ 
\begin{array}{ccc}
0, & \text{if} & 0<t<\frac{1}{4}; \\ 
\mathcal{F}\varphi (t\xi )\mathcal{F}\Psi (\xi ), & \text{if} & \frac{1}{4}%
\leq t\leq 1%
\end{array}%
\right.
\end{equation*}%
for any $\xi \in 
\mathbb{R}
^{n}$. Then we see that 
\begin{equation*}
\Phi \ast f=\Phi \ast \Psi \ast f+\int_{1/4}^{1}\Phi \ast \psi _{\tau }\ast f%
\frac{d\tau }{\tau }
\end{equation*}%
and%
\begin{equation*}
\varphi _{t}\ast f=\int_{t/4}^{\min (1,4t)}\varphi _{t}\ast \psi _{\tau
}\ast f\frac{d\tau }{\tau }+\left\{ 
\begin{array}{ccc}
0, & \text{if} & 0<t<\frac{1}{4}; \\ 
\varphi _{t}\ast \Psi \ast f, & \text{if} & \frac{1}{4}\leq t\leq 1.%
\end{array}%
\right.
\end{equation*}%
First observe that%
\begin{eqnarray*}
|\Phi \ast \psi _{\tau }\ast f| &\lesssim &|\eta _{0,m}\ast \psi _{\tau
}\ast f| \\
&\lesssim &\eta _{0,m}\ast \tau ^{-\alpha (\cdot )}|\psi _{\tau }\ast
f|,\quad \frac{1}{4}\leq \tau \leq 1,m>n
\end{eqnarray*}%
and%
\begin{equation*}
|\Phi \ast \Psi \ast f|\lesssim \eta _{0,m}\ast |\Psi \ast f|,\quad m>n.
\end{equation*}%
Therefore,%
\begin{eqnarray*}
|\Phi \ast f| &\leq &\eta _{0,m}\ast |\Psi \ast f|+\int_{1/4}^{1}\eta
_{0,m}\ast \tau ^{-\alpha (\cdot )}|\psi _{\tau }\ast f|\frac{d\tau }{\tau }
\\
&=&\eta _{0,m}\ast |\Psi \ast f|+g.
\end{eqnarray*}%
Since\ $p\in \mathcal{P}^{\log }\left( 
\mathbb{R}
^{n}\right) $ and the convolution with a radially decreasing $L^{1}$%
-function is bounded on $L^{p(\cdot )}$:%
\begin{equation*}
\big\|\eta _{0,m}\ast |\Psi \ast f|\big\|_{p(\cdot )}\lesssim \big\|\Psi
\ast f\big\|_{p(\cdot )}\leq 1.
\end{equation*}%
Now, for some suitable positive constant $c_{1}$, 
\begin{equation*}
\big\|c_{1}g\big\|_{p(\cdot )}\leq 1
\end{equation*}%
if and only if%
\begin{equation*}
\big\||c_{1}g|^{q(\cdot )}\big\|_{\frac{p(\cdot )}{q(\cdot )}}\leq 1,
\end{equation*}%
which follows by Lemma \ref{Al-Ha-Lemma 1}/(i). Therefore,%
\begin{equation*}
\big\|\Phi \ast f\big\|_{p(\cdot )}\lesssim 1.
\end{equation*}%
Using the fact that the convolution with a radially decreasing $L^{1}$%
-function is bounded in $L^{p(\cdot )}$, we obtain%
\begin{equation*}
\big\||c\varphi _{t}\ast \Psi \ast f|^{q(\cdot )}\big\|_{\frac{p(\cdot )}{%
q(\cdot )}}\leq 1,
\end{equation*}%
with an appropriate choice of $c$ and any\ $t\in (0,1]$. Observe that%
\begin{equation*}
|\varphi _{t}\ast f|\lesssim \int_{t/4}^{4t}\eta _{\tau ,m}\ast |\psi _{\tau
}\ast f|\frac{d\tau }{\tau },\quad m>n+c_{\log }(\frac{1}{q}),t\in (0,\frac{1%
}{4}].
\end{equation*}%
Applying\ again Lemma \ref{Al-Ha-Lemma 1}/(i), we find that%
\begin{equation*}
\int_{0}^{\frac{1}{4}}\big\||ct^{-\alpha (\cdot )}(\varphi _{t}\ast
f)|^{q(\cdot )}\big\|_{\frac{p(\cdot )}{q(\cdot )}}\frac{dt}{t}\leq 1
\end{equation*}%
for some suitable positive constant $c$. The proof of theorem is complete.
\end{proof}

Let $a>0$, $\alpha :\mathbb{R}^{n}\rightarrow \mathbb{R}$ and $f\in \mathcal{%
S}^{\prime }(\mathbb{R}^{n})$. Then we define the Peetre maximal function as
follows: 
\begin{equation*}
\varphi _{t}^{\ast ,a}t^{-\alpha (\cdot )}f(x):=\sup_{y\in \mathbb{R}^{n}}%
\frac{t^{-\alpha (y)}\left\vert \varphi _{t}\ast f(y)\right\vert }{\left(
1+t^{-1}\left\vert x-y\right\vert \right) ^{a}},\qquad t>0
\end{equation*}%
and%
\begin{equation*}
\Phi ^{\ast ,a}f(x):=\sup_{y\in \mathbb{R}^{n}}\frac{\left\vert \Phi \ast
f(y)\right\vert }{\left( 1+\left\vert x-y\right\vert \right) ^{a}}.
\end{equation*}%
We now present a fundamental characterization of the spaces under
consideration.

\begin{theorem}
\label{fun-char}Let $\alpha ,\frac{1}{q}\in C_{\mathrm{loc}}^{\log }(\mathbb{%
R}^{n})$, $p\in \mathcal{P}_{0}^{\log }\left( 
\mathbb{R}
^{n}\right) $, $q^{-}\geq p^{-}$ and $a>\frac{n+c_{\log }(\frac{1}{q})}{p^{-}%
}+c_{\log }(\alpha )$. Then%
\begin{equation*}
\big\|f\big\|_{\mathfrak{B}_{p(\cdot ),q(\cdot )}^{\alpha (\cdot )}}^{\ast
}:=\big\|\Phi ^{\ast ,a}\big\|_{p(\cdot )}+\big\|(\varphi _{t}^{\ast
,a}t^{-\alpha (\cdot )}f)_{0<t\leq 1}\big\|_{\widetilde{\ell ^{q(\cdot
)}(L^{p\left( \cdot \right) })}}
\end{equation*}%
\textit{is an equivalent quasi-norm in }$\mathfrak{B}_{p(\cdot ),q(\cdot
)}^{\alpha (\cdot )}$.
\end{theorem}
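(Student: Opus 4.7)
The proof naturally splits into an easy direction and a hard direction. For the easy inequality $\|f\|_{\mathfrak{B}_{p(\cdot),q(\cdot)}^{\alpha(\cdot)}}^{\Phi,\varphi} \lesssim \|f\|_{\mathfrak{B}_{p(\cdot),q(\cdot)}^{\alpha(\cdot)}}^{\blacktriangledown}$, I would simply specialise the supremum defining each Peetre maximal to $y=x$, which yields the pointwise bounds $|\Phi \ast f(x)| \leq \Phi^{\ast,a} f(x)$ and $t^{-\alpha(x)}|\varphi_t \ast f(x)| \leq \varphi_t^{\ast,a} t^{-\alpha(\cdot)}f(x)$; taking the $L^{p(\cdot)}$ norm of the first and the $\widetilde{\ell^{q(\cdot)}(L^{p(\cdot)})}$ norm of the second completes this direction.

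For the reverse direction, I fix $r$ with $n/a < r < p^{-}$, which is possible by the hypothesis $a > n/p^{-}$, and target the pointwise estimate
\[
\bigl(\varphi_t^{\ast,a} t^{-\alpha(\cdot)}f(x)\bigr)^{r} \leq c\, \eta_{t,M} \ast \bigl|t^{-\alpha(\cdot)}\varphi_t \ast f\bigr|^{r}(x),
\]
where $M$ can be chosen arbitrarily large at the expense of the constant. The derivation proceeds in three steps: first, pull $t^{-\alpha(y)}$ outside the supremum via Lemma \ref{DHR-lemma}, using the equivalent statement $t^{-\alpha(y)} \leq c\, t^{-\alpha(x)}(1+t^{-1}|x-y|)^{c_{\log}(\alpha)}$; second, apply a Peetre-type inequality built from Lemma \ref{r-trick} to the bandlimited object $\varphi_t \ast f$, producing a bound of the schematic form $c\, t^{-r\alpha(x)}\eta_{t,M'} \ast |\varphi_t \ast f|^{r}(x)$ for $r$-trick-admissible parameters; third, invoke Lemma \ref{DHR-lemma} once more, with exponent $r\alpha \in C_{\mathrm{loc}}^{\log}$ (its log-Hölder constant being $r\, c_{\log}(\alpha)$), to move $t^{-r\alpha}$ back inside the convolution. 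The analogous estimate for $\Phi^{\ast,a} f$ follows by a simpler version of the same procedure, without the smoothness factor.

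Having the pointwise bound, I would take the $\widetilde{\ell^{q(\cdot)/r}(L^{p(\cdot)/r})}$-quasi-norm of both sides: because $r < p^{-}$ and $\tfrac{1}{q/r}=\tfrac{r}{q}\in C_{\mathrm{loc}}^{\log}$, the hypotheses of Lemma \ref{Al-Ha-Lemma} hold at scale $r$, so
\[
\Bigl\|\bigl(\eta_{t,M} \ast |t^{-\alpha(\cdot)}\varphi_t \ast f|^{r}\bigr)_{0<t\le 1}\Bigr\|_{\widetilde{\ell^{q(\cdot)/r}(L^{p(\cdot)/r})}} \lesssim \Bigl\|\bigl(|t^{-\alpha(\cdot)}\varphi_t \ast f|^{r}\bigr)_{0<t\le 1}\Bigr\|_{\widetilde{\ell^{q(\cdot)/r}(L^{p(\cdot)/r})}}.
\]
The scaling identity $\|(|F_t|^{r})_t\|_{\widetilde{\ell^{q(\cdot)/r}(L^{p(\cdot)/r})}} = \|(F_t)_t\|_{\widetilde{\ell^{q(\cdot)}(L^{p(\cdot)})}}^{r}$, immediate from the definition of the modular, then lets me extract $r$-th roots and conclude.

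The main obstacle is precisely the interaction between the Peetre supremum and the internal factor $t^{-\alpha(y)}$: Lemma \ref{r-trick} applies only to bandlimited functions, but $t^{-\alpha(\cdot)}\varphi_t \ast f$ is \emph{not} bandlimited because of the variable smoothness. Shuttling $t^{-\alpha}$ between the $x$- and $y$-variables via Lemma \ref{DHR-lemma}, and choosing $M$ large enough that every convolution remains bounded on the relevant variable-exponent Lebesgue space (in particular so that $M > n + c_{\log}(r/q)$ for the final application of Lemma \ref{Al-Ha-Lemma}) is the delicate bookkeeping that drives the argument.
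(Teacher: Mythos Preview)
Your strategy matches the paper's: the trivial direction by setting $y=x$, and the hard direction by combining Lemma~\ref{r-trick}, Lemma~\ref{DHR-lemma}, and Lemma~\ref{Al-Ha-Lemma} after passing to the $r$-th power. There is, however, one point where your bookkeeping goes wrong: the claim that the final kernel exponent $M$ ``can be chosen arbitrarily large'' is not correct under your ordering of the three steps. Once you pull $t^{-\alpha(y)}$ out to $t^{-\alpha(x)}$ you have replaced the Peetre exponent $a$ by $a-c_{\log}(\alpha)$; the Peetre/$r$-trick step then yields a kernel whose exponent is at most $(a-c_{\log}(\alpha))r$, and pushing $t^{-r\alpha(x)}$ back inside costs another $r\,c_{\log}(\alpha)$, so the final $M$ is capped near $(a-2c_{\log}(\alpha))r$ --- it is \emph{not} a free parameter. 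Since your closing step (Lemma~\ref{Al-Ha-Lemma} at scale $r$) needs $M>n+c_{\log}(r/q)$, this forces a strictly stronger hypothesis on $a$ than the stated $a>n/p^{-}$.

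The paper sidesteps this by reversing the order: it applies the $r$-trick \emph{first} at the point $y$ with an auxiliary exponent $\sigma$ that can be taken as large as one likes, \emph{then} uses Lemma~\ref{DHR-lemma} a single time to move $t^{-\alpha(y)}$ directly into the integration variable --- the $c_{\log}(\alpha)$ loss is absorbed by $\sigma$, not by $a$ --- and only afterwards divides by the Peetre weight and shifts from $y$ to $x$. This produces the kernel $\eta_{t,ap^{-}}$ with no $c_{\log}(\alpha)$-deduction from $a$. Your two applications of Lemma~\ref{DHR-lemma} are thus one too many; a single application, placed after the $r$-trick rather than before, is what makes the exponents line up.
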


\begin{proof}
It is easy to see that for any $f\in \mathcal{S}^{\prime }(\mathbb{R}^{n})$
with $\big\|f\big\|_{\mathfrak{B}_{p(\cdot ),q(\cdot )}^{\alpha (\cdot
)}}^{\ast }<\infty $ and any $x\in \mathbb{R}^{n}$ we have%
\begin{equation*}
t^{-\alpha (x)}\left\vert \varphi _{t}\ast f(x)\right\vert \leq \varphi
_{t}^{\ast ,a}t^{-\alpha (\cdot )}f(x)\text{.}
\end{equation*}%
This shows that $\big\|f\big\|_{\mathfrak{B}_{p(\cdot ),q(\cdot )}^{\alpha
(\cdot )}}\leq \big\|f\big\|_{\mathfrak{B}_{p(\cdot ),q(\cdot )}^{\alpha
(\cdot )}}^{\ast }$. We will prove that there is a constant $C>0$ such that
for every $f\in \mathfrak{B}_{p(\cdot ),q(\cdot )}^{\alpha (\cdot )}$%
\begin{equation}
\big\|f\big\|_{\mathfrak{B}_{p(\cdot ),q(\cdot )}^{\alpha (\cdot )}}^{\ast
}\leq C\big\|f\big\|_{\mathfrak{B}_{p(\cdot ),q(\cdot )}^{\alpha (\cdot )}}.
\label{estimate-B}
\end{equation}%
By Lemmas \ref{DHR-lemma} and \ref{r-trick}, the estimate 
\begin{eqnarray}
t^{-\alpha (y)}\left\vert \varphi _{t}\ast f(y)\right\vert  &\leq &C_{1}%
\text{ }t^{-\alpha (y)}\big(\eta _{t,\sigma p^{-}}\ast |\varphi _{t}\ast
f|^{p^{-}}(y)\big)^{1/p^{-}}  \notag \\
&\leq &C_{2}\text{ }\big(\eta _{t,(\sigma -c_{\log }(\alpha ))p^{-}}\ast
(t^{-\alpha (\cdot )}|\varphi _{t}\ast f|)^{p^{-}}(y)\big)^{1/p^{-}}
\label{esti-conv}
\end{eqnarray}%
is true for any $y\in \mathbb{R}^{n}$, $\sigma >\frac{n+c_{\log }(\frac{1}{q}%
)}{p^{-}}+c_{\log }(\alpha )$ and $t>0$. Now dividing both sides of %
\eqref{esti-conv} by $\left( 1+t^{-1}\left\vert x-y\right\vert \right) ^{a}$%
, in the right-hand side we use the inequality%
\begin{equation*}
\left( 1+t^{-1}\left\vert x-y\right\vert \right) ^{-a}\leq \left(
1+t^{-1}\left\vert x-z\right\vert \right) ^{-a}\left( 1+t^{-1}\left\vert
y-z\right\vert \right) ^{a},\quad x,y,z\in \mathbb{R}^{n},
\end{equation*}%
while in the left-hand side we take the supremum over $y\in \mathbb{R}^{n}$,
we find that for all $f\in \mathfrak{B}_{p(\cdot ),q(\cdot )}^{\alpha (\cdot
)}$ any $t>0$ and any $\sigma >\max \big(\frac{n+c_{\log }(\frac{1}{q})}{%
p^{-}}+c_{\log }(\alpha ),a+c_{\log }(\alpha )\big)$%
\begin{equation*}
\varphi _{t}^{\ast ,a}t^{-\alpha (\cdot )}f(x)\leq C_{2}\text{ }\big(\eta
_{t,ap^{-}}\ast (t^{-\alpha (\cdot )p^{-}}|\varphi _{t}\ast f|^{p^{-}})(x)%
\big)^{1/p^{-}},
\end{equation*}%
where $C_{2}>0$ is independent of $x,t$ and $f$. Assume that the right-hand
side of \eqref{estimate-B} is less than or equal $1$. We will prove that%
\begin{equation}
\big\|\Phi ^{\ast ,a}\big\|_{p(\cdot )}+\Big\|\Big(\big(\eta _{t,ap^{-}}\ast
(t^{-\alpha (\cdot )p^{-}}|\varphi _{t}\ast f|^{p^{-}})\big)^{1/p^{-}}\Big)%
_{0<t\leq 1}\Big\|_{\widetilde{\ell ^{q(\cdot )}(L^{p\left( \cdot \right) })}%
}\lesssim 1.  \label{estimate1}
\end{equation}%
Observe that the second quasi-norm of the left-hand side of \eqref{estimate1}
can be rewritten as%
\begin{equation}
\Big\|\Big(\eta _{t,ap^{-}}\ast (t^{-\alpha (\cdot )p^{-}}|\varphi _{t}\ast
f|^{p^{-}})\Big)_{0<t\leq 1}\Big\|_{\widetilde{\ell ^{\frac{q(\cdot )}{p^{-}}%
}(L^{\frac{p\left( \cdot \right) }{p^{-}}})}}^{\frac{1}{p^{-}}}.
\label{estimate2}
\end{equation}%
Let $0<t<\frac{1}{4}$. In view the proof of Lemma \ref{Main-Lemma 2}/(ii),
we obtain%
\begin{eqnarray*}
t^{-\alpha (\cdot )p^{-}}|\varphi _{t}\ast f|^{p^{-}} &\lesssim
&\int_{t/4}^{4t}\tau ^{-\alpha (\cdot )p^{-}}\eta _{\tau ,ap^{-}}\ast
|\varphi _{\tau }\ast f|^{p^{-}}\frac{d\tau }{\tau } \\
&\lesssim &\int_{t/4}^{4t}\eta _{\tau ,ap^{-}-c_{\log }(\alpha )p^{-}}\ast
\tau ^{-\alpha (\cdot )p^{-}}|\varphi _{\tau }\ast f|^{p^{-}}\frac{d\tau }{%
\tau },
\end{eqnarray*}%
by Lemma \ref{DHR-lemma}. Therefore,%
\begin{eqnarray*}
\eta _{t,ap^{-}}\ast (t^{-\alpha (\cdot )p^{-}}|\varphi _{t}\ast f|^{p^{-}})
&\lesssim &\int_{t/4}^{4t}\eta _{t,ap^{-}}\ast \eta _{\tau ,ap^{-}-c_{\log
}(\alpha )p^{-}}\ast \tau ^{-\alpha (\cdot )p^{-}}|\varphi _{\tau }\ast
f|^{p^{-}}\frac{d\tau }{\tau } \\
&\lesssim &\int_{t/4}^{4t}\eta _{\tau ,ap^{-}-c_{\log }(\alpha )p^{-}}\ast
\tau ^{-\alpha (\cdot )p^{-}}|\varphi _{\tau }\ast f|^{p^{-}}\frac{d\tau }{%
\tau },
\end{eqnarray*}%
by \cite[Lemma A.3]{DHR}. Applying Lemma \ref{Al-Ha-Lemma 1}, we deduce that %
\eqref{estimate2}, with $0<t<\frac{1}{4}$, is bounded by%
\begin{equation*}
\Big\|\big(t^{-\alpha (\cdot )p^{-}}|\varphi _{t}\ast f|^{p^{-}}\big)%
_{0<t\leq 1}\Big\|_{\widetilde{\ell ^{\frac{q(\cdot )}{p^{-}}}(L^{\frac{%
p\left( \cdot \right) }{p^{-}}})}}^{\frac{1}{p^{-}}}\lesssim 1.
\end{equation*}%
Now let $\frac{1}{4}\leq t\leq 1$. Again, by\ Lemma \ref{Main-Lemma 2}/(ii),
we get%
\begin{equation*}
|\varphi _{t}\ast f|^{p^{-}}\leq c\text{ }\eta _{1,ap^{-}}\ast |\Phi \ast
f|^{p^{-}}+c\int_{t/4}^{1}\eta _{\tau ,ap^{-}}\ast |\varphi _{\tau }\ast
f|^{p^{-}}\frac{d\tau }{\tau }.
\end{equation*}%
As above, we obtain%
\begin{eqnarray*}
&&\eta _{t,ap^{-}}\ast (t^{-\alpha (\cdot )p^{-}}|\varphi _{t}\ast
f|^{p^{-}}) \\
&\leq &c\text{ }\eta _{1,ap^{-}}\ast |\Phi \ast
f|^{p^{-}}+c\int_{t/4}^{1}\eta _{\tau ,ap^{-}-c_{\log }(\alpha )p^{-}}\ast
\tau ^{-\alpha (\cdot )p^{-}}|\varphi _{\tau }\ast f|^{p^{-}}\frac{d\tau }{%
\tau } \\
&=&c\text{ }\eta _{1,ap^{-}}\ast |\Phi \ast f|^{p^{-}}+h_{t}(x).
\end{eqnarray*}%
We need to prove that%
\begin{equation}
\Big\|\big(\eta _{1,ap^{-}}\ast |\Phi \ast f|^{p^{-}}\big)_{\frac{1}{4}\leq
t\leq 1}\Big\|_{\widetilde{\ell ^{\frac{q(\cdot )}{p^{-}}}(L^{\frac{p\left(
\cdot \right) }{p^{-}}})}}\lesssim 1\quad \text{and}\quad \big\|(h_{t})_{%
\frac{1}{4}\leq t\leq 1}\big\|_{\widetilde{\ell ^{\frac{q(\cdot )}{p^{-}}%
}(L^{\frac{p\left( \cdot \right) }{p^{-}}})}}\lesssim 1.  \label{estimate3}
\end{equation}%
Applying Lemma \ref{Al-Ha-Lemma 1}, we obtain the second estimate of %
\eqref{estimate3}\textrm{. }Let us prove the first one. This is equivalent to%
\begin{equation*}
\big\||\eta _{1,ap^{-}}\ast |\Phi \ast f|^{p^{-}}|^{\frac{q(\cdot )}{p^{-}}}%
\big\|_{\frac{p\left( \cdot \right) }{q(\cdot )}}\lesssim 1,
\end{equation*}%
which is equivalent to%
\begin{equation*}
\big\|\eta _{1,ap^{-}}\ast |\Phi \ast f|^{p^{-}}\big\|_{\frac{p\left( \cdot
\right) }{p^{-}}}\lesssim 1.
\end{equation*}%
Since\ $\frac{p\left( \cdot \right) }{p^{-}}\in \mathcal{P}^{\log }\left( 
\mathbb{R}
^{n}\right) $ and the convolution with a radially decreasing $L^{1}$%
-function is bounded in $L^{p(\cdot )}$:%
\begin{equation*}
\big\|\eta _{1,ap^{-}}\ast |\Phi \ast f|^{p^{-}}\big\|_{\frac{p\left( \cdot
\right) }{p^{-}}}\lesssim \big\||\Phi \ast f|^{p^{-}}\big\|_{\frac{p\left(
\cdot \right) }{p^{-}}}=c\big\|\Phi \ast f\big\|_{p\left( \cdot \right)
}^{p^{-}}\lesssim 1.
\end{equation*}%
The estimate of $\big\|\Phi ^{\ast ,a}\big\|_{p(\cdot )}$ follows easily\
from the fact that%
\begin{equation*}
\big\|\Phi ^{\ast ,a}\big\|_{p(\cdot )}\lesssim \big\|\eta _{1,ap^{-}}\ast
|\Phi \ast f|^{p^{-}}\big\|_{\frac{p\left( \cdot \right) }{p^{-}}}^{\frac{1}{%
p^{-}}}\lesssim 1.
\end{equation*}%
The proof of Theorem \ref{fun-char} is complete.
\end{proof}

\section{Relation between $\mathfrak{B}_{p(\cdot ),q(\cdot )}^{\protect%
\alpha (\cdot )}$ and $B_{p(\cdot ),q(\cdot )}^{\protect\alpha (\cdot )}$}

In this section we present the coincidence between the above function spaces
and the variable Besov spaces of Almeida and H\"{a}st\"{o}, where to define
these function spaces we first need the concept of a smooth dyadic
resolution of unity. Let $\Psi $\ be a function\ in $\mathcal{S}(%
\mathbb{R}
^{n})$\ satisfying $\Psi (x)=1$\ for\ $\left\vert x\right\vert \leq 1$\ and\ 
$\Psi (x)=0$\ for\ $\left\vert x\right\vert \geq 2$.\ We define $\psi _{0}$
and $\psi _{1}$ by $\mathcal{F}\psi _{0}(x)=\Psi (x)$, $\mathcal{F}\psi
_{1}(x)=\Psi (\frac{x}{2})-\Psi (x)$\ and 
\begin{equation*}
\mathcal{F}\psi _{v}(x)=\mathcal{F}\psi _{1}(2^{1-v}x)\quad \text{for}\quad
v=2,3,....
\end{equation*}%
Then $\{\mathcal{F}\psi _{v}\}_{v\in \mathbb{N}_{0}}$\ is a smooth dyadic
resolution of unity, $\sum_{v=0}^{\infty }\mathcal{F}\psi _{v}(x)=1$ for all 
$x\in \mathbb{R}^{n}$.\ Thus we obtain the Littlewood-Paley decomposition 
\begin{equation*}
f=\sum_{v=0}^{\infty }\psi _{v}\ast f
\end{equation*}%
for all $f\in \mathcal{S}^{\prime }(%
\mathbb{R}
^{n})$\ $($convergence in $\mathcal{S}^{\prime }(%
\mathbb{R}
^{n}))$.

We state the definition of the spaces $B_{p(\cdot ),q(\cdot )}^{s(\cdot )}$,
which introduced and investigated in \cite{AH}.

\begin{definition}
\label{11}\textit{Let }$\left\{ \mathcal{F}\psi _{v}\right\} _{v\in \mathbb{N%
}_{0}}$\textit{\ be a resolution of unity}, $s:\mathbb{R}^{n}\rightarrow 
\mathbb{R}$ and $p,q\in \mathcal{P}_{0}(\mathbb{R}^{n})$.\textit{\ The Besov
space }$B_{p(\cdot ),q(\cdot )}^{s(\cdot )}$\textit{\ consists of all
distributions }$f\in \mathcal{S}^{\prime }(%
\mathbb{R}
^{n})$\textit{\ such that}%
\begin{equation*}
\big\|f\big\|_{B_{p(\cdot ),q(\cdot )}^{s(\cdot )}}:=\big\|(2^{vs(\cdot
)}\psi _{v}\ast f)_{v}\big\|_{\ell ^{q(\cdot )}(L^{p\left( \cdot \right)
})}<\infty .
\end{equation*}
\end{definition}

Taking $s\in \mathbb{R}$ and $q\in (0,\infty ]$ as constants we derive the
spaces $B_{p(\cdot ),q}^{s}$ studied by Xu in \cite{Xu08}. We refer the
reader to the recent papers \cite{AlCa151}, \cite{AlCa152}, \cite{D3} and 
\cite{KV122} for further details, historical remarks and more references on
these function spaces. For any $p,q\in \mathcal{P}_{0}^{\log }(\mathbb{R}%
^{n})$ and $s\in C_{\text{loc}}^{\log }$, the space $B_{p(\cdot ),q(\cdot
)}^{s(\cdot )}$ does not depend on the chosen smooth dyadic resolution of%
\textit{\ }unity $\{\mathcal{F}\psi _{v}\}_{v\in \mathbb{N}_{0}}$ (in the
sense of\ equivalent quasi-norms) and 
\begin{equation*}
\mathcal{S}(\mathbb{R}^{n})\hookrightarrow B_{p(\cdot ),q(\cdot )}^{s(\cdot
)}\hookrightarrow \mathcal{S}^{\prime }(\mathbb{R}^{n}).
\end{equation*}%
Moreover, if $p,q,s$ are constants, we re-obtain the usual Besov spaces $%
B_{p,q}^{s}$, studied in detail in \cite{T1} and\ \cite{T2}, see also \cite%
{S18}.

\begin{theorem}
\label{Besov-Al-Ha}Let $\alpha :\mathbb{R}^{n}\rightarrow \mathbb{R}$\ and\ $%
p,q\in \mathcal{P}_{0}(\mathbb{R}^{n})$. Assume that\ $p\in \mathcal{P}%
_{0}^{\log }\left( \mathbb{R}^{n}\right) $ and $\alpha ,\frac{1}{q}\in C_{%
\mathrm{loc}}^{\log }(\mathbb{R}^{n})$. \textit{Then}%
\begin{equation*}
\mathfrak{B}_{p(\cdot ),q(\cdot )}^{\alpha (\cdot )}=B_{p(\cdot ),q(\cdot
)}^{\alpha (\cdot )},
\end{equation*}%
in the sense of equivalent quasi-norms.
\end{theorem}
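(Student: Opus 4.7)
The plan is to prove both directions of $\|f\|_{\mathfrak{B}_{p(\cdot),q(\cdot)}^{\alpha(\cdot)}}\approx\|f\|_{B_{p(\cdot),q(\cdot)}^{\alpha(\cdot)}}$ by aligning the continuous scale $t\in(0,1]$ with the dyadic scale $v\in\mathbb{N}_0$. Theorem \ref{Independent} allows any admissible $\{\Phi,\varphi\}$ to be used in the $\mathfrak{B}$-norm, so the essence is to pass between $\int_0^1\cdots\frac{dt}{t}$ and $\sum_{v\geq 0}\cdots$ via finite-multiplicity covers by dyadic intervals.

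For $\|f\|_{\mathfrak{B}}\lesssim\|f\|_{B}$, I would start from the dyadic decomposition $f=\sum_j\psi_j*f$. Since $\mathcal{F}\varphi(t\,\cdot)$ is supported in $\{1/(2t)\leq|\xi|\leq 2/t\}$, for $t\in(2^{-v-1},2^{-v}]$ only indices $|j-v|\leq 2$ contribute, so $\varphi_t*f=\sum_{|j-v|\leq 2}\varphi_t*\psi_j*f$. Fixing $0<r<\min(p^-,q^-)$, Lemma \ref{r-trick} yields $|\varphi_t*\psi_j*f|\lesssim(\eta_{2^{-j},mr}*|\psi_j*f|^r)^{1/r}$, and Lemma \ref{DHR-lemma} absorbs $t^{-\alpha(x)}\sim 2^{v\alpha(x)}$ inside the convolution at the price of slightly enlarging $m$. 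Splitting $\int_0^1=\sum_v\int_{2^{-v-1}}^{2^{-v}}$ converts the continuous modular to a discrete one; Lemma \ref{Alm-Hastolemma1}, applied with $p/r,q/r$ (admissible since $r<\min(p^-,q^-)$), disposes of the $\eta_{2^{-j},m}$ convolutions and yields the bound by $\|f\|_{B}$. The low-frequency piece $\|\Phi*f\|_{p(\cdot)}$ is handled analogously using only finitely many $\psi_j$.

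For $\|f\|_{B}\lesssim\|f\|_{\mathfrak{B}}$, I would invoke Lemma \ref{Main-Lemma 2}/(ii) with $\omega=\psi_1$ at scale $t=2^{-v}$ for $v\geq 1$ to obtain
$$|\psi_v*f|^r\lesssim\eta_{1,mr}*|\Phi*f|^r+\int_{2^{-v-2}}^{\min(1,2^{-v+2})}\eta_{\tau,mr}*|\varphi_\tau*f|^r\,\frac{d\tau}{\tau},$$
with Lemma \ref{Main-Lemma 2}/(i) covering $v=0$. Multiplying by $2^{vr\alpha(x)}$ and using Lemma \ref{DHR-lemma}---valid because $\tau\sim 2^{-v}$ on the integration range---pushes the smoothness factor inside, turning $2^{vr\alpha(x)}|\varphi_\tau*f|^r$ into $\tau^{-r\alpha(\cdot)}|\varphi_\tau*f|^r$. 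Summing the resulting estimates for $\||\psi_v*f|^{q(\cdot)}\|_{p(\cdot)/q(\cdot)}$ over $v$ and observing that the intervals $(2^{-v-2},2^{-v+2})$ cover $(0,1]$ with bounded multiplicity reassembles the sum of integrals into a single $\int_0^1\cdots\frac{d\tau}{\tau}$; Lemma \ref{Al-Ha-Lemma 1}/(ii) then handles the residual $\eta_{\tau,m}$ convolutions and delivers $\|f\|_{\mathfrak{B}}$.

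The main obstacle will be the interplay of the three variable parameters $p,q,\alpha$ with the continuous-to-discrete rescaling: because the Hardy--Littlewood maximal operator fails to be bounded on $\ell^{q(\cdot)}(L^{p(\cdot)})$, every convolution must be routed through the $\eta_{t,m}$-lemmas with $m$ strictly larger than $\max(n/r,\,n+c_{\log}(1/q)+c_{\log}(\alpha))$, and Lemma \ref{DHR-lemma} must be invoked at each step to shift the $\alpha(\cdot)$-factor. Keeping these constraints simultaneously satisfied while threading the argument through modulars, norms, and the sum-integral conversions is the main technical difficulty.
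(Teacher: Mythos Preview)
Your proposal is correct and follows essentially the same route as the paper: both directions are obtained by matching the continuous scale $t\in(0,1]$ with the dyadic scale via the finite Fourier-support overlap, then pushing the $t^{-\alpha(\cdot)}$ weight through convolutions with Lemma~\ref{DHR-lemma} and controlling the $\eta$-convolutions with Lemmas~\ref{Alm-Hastolemma1}/\ref{Al-Ha-Lemma}/\ref{Al-Ha-Lemma 1}. The only minor deviations are that the paper first invokes Lemma~\ref{Main-Lemma 2} to reduce to $p\in\mathcal{P}^{\log}$ and then works with the \emph{linear} bound $|\psi_v\ast\varphi_t\ast f|\lesssim\eta_{t,m}\ast|\varphi_t\ast f|$ rather than the $r$-trick, and in your second direction the lemma that actually closes the argument at each $v$ is part~(i) of Lemma~\ref{Al-Ha-Lemma 1} (applied with $t=2^{-v}$, $\alpha=\tfrac14$, $\beta=4$), not part~(ii).
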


\begin{proof}
Step 1. We will prove that%
\begin{equation*}
\mathfrak{B}_{p(\cdot ),q(\cdot )}^{\alpha (\cdot )}\hookrightarrow
B_{p(\cdot ),q(\cdot )}^{\alpha (\cdot )}.
\end{equation*}%
From Lemma \ref{Main-Lemma 2} we only consider the case $p\in \mathcal{P}%
^{\log }\left( \mathbb{R}^{n}\right) $ and $q\in \mathcal{P}(\mathbb{R}^{n})$
with $\frac{1}{q}\in C_{\mathrm{loc}}^{\log }(\mathbb{R}^{n})$. Let $\{%
\mathcal{F}\Phi ,\mathcal{F}\varphi \}$ and $\left\{ \mathcal{F}\psi
_{j}\right\} _{j\in \mathbb{N}_{0}}$ be two resolutions of unity and let $%
f\in \mathfrak{B}_{p(\cdot ),q(\cdot )}^{\alpha (\cdot )}$ with%
\begin{equation*}
\big\|f\big\|_{\mathfrak{B}_{p(\cdot ),q(\cdot )}^{\alpha (\cdot )}}\leq 1.
\end{equation*}%
We have%
\begin{equation*}
\psi _{v}\ast f=\int_{2^{-v-2}}^{\min (1,2^{2-v})}\psi _{v}\ast \varphi
_{t}\ast f\frac{dt}{t}+\left\{ 
\begin{array}{ccc}
0, & \text{if} & v\geq 2; \\ 
\psi _{v}\ast \Phi \ast f, & \text{if} & v=0,1.%
\end{array}%
\right.
\end{equation*}%
Since the convolution with a radially decreasing $L^{1}$-function is bounded
in $L^{p(\cdot )}$, we obtain%
\begin{equation*}
\big\||c\text{ }\psi _{v}\ast \Phi \ast f|^{q(\cdot )}\big\|_{\frac{p(\cdot )%
}{q(\cdot )}}\leq 1,\quad v=0,1
\end{equation*}%
for some suitable positive constant $c$. Applying Lemma \ref{Al-Ha-Lemma 1},
we obtain%
\begin{equation*}
\big\||c_{1}\text{ }2^{v\alpha (\cdot )}\psi _{v}\ast f|^{q(\cdot )}\big\|_{%
\frac{p(\cdot )}{q(\cdot )}}\leq \int_{2^{-v-2}}^{\min (1,2^{2-v})}\big\|%
|t^{\alpha (\cdot )}\varphi _{t}\ast f|^{q(\cdot )}\big\|_{\frac{p(\cdot )}{%
q(\cdot )}}\frac{dt}{t}+2^{-v},\quad v\geq 2,
\end{equation*}%
with an appropriate choice of $c_{1}$. Taking the sum over $v\geq 2$, we
obtain $\big\|f\big\|_{B_{p(\cdot ),q(\cdot )}^{\alpha (\cdot )}}\lesssim 1.$

Step 2. We will prove that%
\begin{equation*}
B_{p(\cdot ),q(\cdot )}^{\alpha (\cdot )}\hookrightarrow \mathfrak{B}%
_{p(\cdot ),q(\cdot )}^{\alpha (\cdot )}.
\end{equation*}%
Let $\{\mathcal{F}\Phi ,\mathcal{F}\varphi \}$ and $\left\{ \mathcal{F}\psi
_{v}\right\} _{v\in \mathbb{N}_{0}}$ be two resolutions of unity and let $%
f\in B_{p(\cdot ),q(\cdot )}^{\alpha (\cdot )}$ with%
\begin{equation*}
\big\|f\big\|_{B_{p(\cdot ),q(\cdot )}^{\alpha (\cdot )}}\leq 1.
\end{equation*}%
We have%
\begin{eqnarray*}
\varphi _{t}\ast f &=&\sum_{v=0}^{\infty }\varphi _{t}\ast \psi _{v}\ast f \\
&=&\sum_{v=\lfloor \log _{2}(\frac{1}{2t})\rfloor }^{\lfloor \log _{2}(\frac{%
4}{t})\rfloor +1}\varphi _{t}\ast \psi _{v}\ast f+\left\{ 
\begin{array}{ccc}
0, & \text{if} & 0<t\leq \frac{1}{4}; \\ 
\psi _{0}\ast \Phi \ast f, & \text{if} & t>\frac{1}{4}%
\end{array}%
\right.
\end{eqnarray*}%
and%
\begin{equation*}
\Phi \ast f=\sum_{v=0}^{2}\Phi \ast \psi _{v}\ast f.
\end{equation*}%
Notice that if $v<0$ then we put $\psi _{v}\ast f=0$. Since the convolution
with a radially decreasing $L^{1}$-function is bounded in $L^{p(\cdot )}$,
we obtain%
\begin{equation*}
\big\||c\psi _{v}\ast \Phi \ast f|^{q(\cdot )}\big\|_{\frac{p(\cdot )}{%
q(\cdot )}}\leq 1,\quad v=0,1,2,
\end{equation*}%
which yields, 
\begin{equation*}
\big\|c|\Phi \ast f|^{q(\cdot )}\big\|_{\frac{p(\cdot )}{q(\cdot )}}\leq 1
\end{equation*}%
for some suitable positive constant $c$. Let $t\in (2^{-i},2^{-i+1}]$, $i\in 
\mathbb{N}$. We have%
\begin{eqnarray*}
t^{-\alpha (\cdot )}|\varphi _{t}\ast f| &\lesssim &\sum_{v=\lfloor \log
_{2}(\frac{1}{2t})\rfloor }^{\lfloor \log _{2}(\frac{4}{t})\rfloor
+1}t^{-\alpha (\cdot )}\eta _{t,m}\ast |\psi _{v}\ast f| \\
&\lesssim &\sum_{v=i-3}^{i-1}2^{(i-v)\alpha ^{-}}\eta _{v,m-c_{\log }(\alpha
)}\ast 2^{v\alpha (\cdot )}|\psi _{v}\ast f| \\
&\leq &c\sum_{j=-3}^{-1}\eta _{j+i,m-c_{\log }(\alpha )}\ast 2^{(j+i)\alpha
(\cdot )}|\psi _{j+i}\ast f|,
\end{eqnarray*}%
where $m>n+c_{\log }(\alpha )+c_{\log }(\frac{1}{q})$. Now observe that%
\begin{eqnarray*}
&&\int_{0}^{1}\big\||c\text{ }t^{-\alpha (\cdot )}\varphi _{t}\ast
f|^{q(\cdot )}\big\|_{\frac{p(\cdot )}{q(\cdot )}}\frac{dt}{t} \\
&=&\sum_{i=0}^{\infty }\int_{2^{-i}}^{2^{1-i}}\big\||t^{-\alpha (\cdot
)}\varphi _{t}\ast f|^{q(\cdot )}\big\|_{\frac{p(\cdot )}{q(\cdot )}}\frac{dt%
}{t} \\
&\leq &\sum_{i=0}^{\infty }\big\|\big(c\text{ }\sum_{j=-3}^{-1}\eta
_{j+i,m-c_{\log }(\alpha )}\ast 2^{(j+i)\alpha (\cdot )}|\psi _{j+i}\ast f|%
\big)^{q(\cdot )}\big\|_{\frac{p(\cdot )}{q(\cdot )}}
\end{eqnarray*}%
for some suitable positive constant $c$. The desired estimate follows by
Lemma \ref{Alm-Hastolemma1}. The proof is complete.
\end{proof}

In order to formulate the main result of this section, let us consider $%
k_{0},k\in \mathcal{S}(\mathbb{R}^{n})$ and $S\geq -1$ an integer such that
for an $\varepsilon >0$%
\begin{align}
\left\vert \mathcal{F}k_{0}(\xi )\right\vert & >0\text{\quad for\quad }%
\left\vert \xi \right\vert <2\varepsilon ,  \label{T-cond1} \\
\left\vert \mathcal{F}k(\xi )\right\vert & >0\text{\quad for\quad }\frac{%
\varepsilon }{2}<\left\vert \xi \right\vert <2\varepsilon   \label{T-cond2}
\end{align}%
and%
\begin{equation}
\int_{\mathbb{R}^{n}}x^{\alpha }k(x)dx=0\text{\quad for any\quad }\left\vert
\alpha \right\vert \leq S.  \label{moment-cond}
\end{equation}%
Here \eqref{T-cond1} and \eqref{T-cond2}\ are Tauberian conditions, while %
\eqref{moment-cond} states that moment conditions on $k$. We recall the
notation%
\begin{equation*}
k_{t}(x):=t^{-n}k(t^{-1}x)\text{\quad for}\quad t>0.
\end{equation*}%
For any $a>0$, $f\in \mathcal{S}^{\prime }(\mathbb{R}^{n})$ and $x\in 
\mathbb{R}^{n}$ we denote%
\begin{equation*}
k_{t}^{\ast ,a}t^{-\alpha \left( \cdot \right) }f(x):=\sup_{y\in \mathbb{R}%
^{n}}\frac{t^{-\alpha \left( y\right) }\left\vert k_{t}\ast f(y)\right\vert 
}{\left( 1+t^{-1}\left\vert x-y\right\vert \right) ^{a}},\quad j\in \mathbb{N%
}_{0}.
\end{equation*}%
We are now able to state the so called local mean characterization of $%
B_{p(\cdot ),q(\cdot )}^{\alpha (\cdot )}$ spaces, which is a more general
form of Theorem \ref{fun-char}.\vskip5pt

\begin{theorem}
\label{loc-mean-char}Let $\alpha ,\frac{1}{q}\in C_{\mathrm{loc}}^{\log }(%
\mathbb{R}^{n})$, $p\in \mathcal{P}_{0}^{\log }\left( 
\mathbb{R}
^{n}\right) $, $a>\frac{n}{p^{-}}$ and $\alpha ^{+}<S+1$\textit{. Then}%
\begin{equation*}
\big\|f\big\|_{B_{p\left( \cdot \right) ,q\left( \cdot \right) }^{\alpha
\left( \cdot \right) }}^{\prime }:=\big\|k_{0}^{\ast ,a}f\big\|_{p(\cdot )}+%
\big\|(k_{t}^{\ast ,a}t^{-\alpha (\cdot )}f)_{0<t\leq 1}\big\|_{\widetilde{%
\ell ^{q(\cdot )}(L^{p\left( \cdot \right) })}}
\end{equation*}%
\textit{is an equivalent quasi-norm on }$B_{p(\cdot ),q(\cdot )}^{\alpha
(\cdot )}$.
\end{theorem}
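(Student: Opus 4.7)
By Theorem \ref{Besov-Al-Ha} we have $B_{p(\cdot),q(\cdot)}^{\alpha(\cdot)}=\mathfrak{B}_{p(\cdot),q(\cdot)}^{\alpha(\cdot)}$, and Theorem \ref{fun-char} then gives that the Peetre-type quasi-norm $\|\cdot\|_{\mathfrak{B}_{p(\cdot),q(\cdot)}^{\alpha(\cdot)}}^{\blacktriangledown}$, built from the distinguished resolution $\{\mathcal{F}\Phi,\mathcal{F}\varphi\}$, is equivalent to $\|\cdot\|_{B_{p(\cdot),q(\cdot)}^{\alpha(\cdot)}}$. The task therefore reduces to proving $\|f\|_{B_{p(\cdot),q(\cdot)}^{\alpha(\cdot)}}'\approx \|f\|_{\mathfrak{B}_{p(\cdot),q(\cdot)}^{\alpha(\cdot)}}^{\blacktriangledown}$, i.e.\ to comparing two Peetre maximal constructions differing only in which kernels are used.

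For the bound $\|f\|_{B_{p(\cdot),q(\cdot)}^{\alpha(\cdot)}}'\lesssim \|f\|_{\mathfrak{B}_{p(\cdot),q(\cdot)}^{\alpha(\cdot)}}^{\blacktriangledown}$, I would plug the Calder\'on reproducing formula $f=\Phi\ast f+\int_{0}^{1}\varphi_{\tau}\ast f\,\frac{d\tau}{\tau}$ into $k_{t}\ast f$, split the resulting $\tau$-integral at $\tau=t$, and estimate each piece by Lemma \ref{Ry-Lemma1}. When $\tau\ge t$, the moment condition \eqref{moment-cond} on $k$ yields a factor $(t/\tau)^{S+1}$ together with spatial decay of any prescribed order; when $\tau<t$, the support assumption on $\mathcal{F}\varphi$ forces $\varphi$ to have vanishing moments of every order, so Lemma \ref{Ry-Lemma1} gives a symmetric factor $(\tau/t)^{L+1}$ for arbitrary $L$. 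Dividing by $(1+t^{-1}|x-y|)^{a}$, using Lemma \ref{DHR-lemma} to transfer the weight $t^{-\alpha(y)}$ to $t^{-\alpha(x)}$ inside the convolution, and taking the supremum in $y$, I obtain a pointwise estimate of the shape
\begin{equation*}
k_{t}^{\ast,a}t^{-\alpha(\cdot)}f(x)\lesssim \Phi^{\ast,a}f(x)+\int_{0}^{t}\Big(\frac{\tau}{t}\Big)^{L}\varphi_{\tau}^{\ast,a}\tau^{-\alpha(\cdot)}f(x)\,\frac{d\tau}{\tau}+\int_{t}^{1}\Big(\frac{t}{\tau}\Big)^{S+1-\alpha^{+}}\varphi_{\tau}^{\ast,a}\tau^{-\alpha(\cdot)}f(x)\,\frac{d\tau}{\tau},
\end{equation*}
together with an analogous bound for $k_{0}^{\ast,a}f$. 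The assumption $\alpha^{+}<S+1$ makes the outer exponent strictly positive, so Hardy's inequality (Lemma \ref{Hardy-inequality1}) combined with Lemma \ref{Al-Ha-Lemma 1}(ii) (and, for the $\Phi^{\ast,a}f$ part, the boundedness of radial $L^{1}$-convolutions on $L^{p(\cdot)}$) converts the pointwise inequality into the desired mixed-norm estimate.

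The reverse direction $\|f\|_{\mathfrak{B}_{p(\cdot),q(\cdot)}^{\alpha(\cdot)}}^{\blacktriangledown}\lesssim \|f\|_{B_{p(\cdot),q(\cdot)}^{\alpha(\cdot)}}'$ is symmetric: using the Tauberian conditions \eqref{T-cond1}--\eqref{T-cond2}, I would first invoke the construction from \cite{Ry01} to produce Schwartz functions $\lambda_{0},\lambda$ satisfying the continuous reproducing identity $\mathcal{F}\lambda_{0}(\xi)\mathcal{F}k_{0}(\xi)+\int_{0}^{1}\mathcal{F}\lambda(t\xi)\mathcal{F}k(t\xi)\,\frac{dt}{t}=1$ with $\lambda$ possessing moment conditions of arbitrarily large order $L$. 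Plugging the identity $f=\lambda_{0}\ast k_{0}\ast f+\int_{0}^{1}\lambda_{\tau}\ast k_{\tau}\ast f\,\frac{d\tau}{\tau}$ into $\Phi\ast f$ and $\varphi_{t}\ast f$ and repeating the splitting argument above (with the roles of $\varphi$ and $k$ interchanged) yields the reverse pointwise inequality, to which Lemmas \ref{Hardy-inequality1} and \ref{Al-Ha-Lemma 1} again apply.

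The most delicate point is the balance between the decay exponent $S+1$ gained from the moment condition on $k$ and the loss of $\alpha^{+}$ produced by the weight $t^{-\alpha(\cdot)}$: the whole scheme works precisely because $\alpha^{+}<S+1$, and carrying this through uniformly in $t\in(0,1]$ relies on a careful application of Lemma \ref{DHR-lemma} to slide the variable smoothness factor inside every convolution without any dependence on the scale. A secondary technical obstacle is ensuring that the continuous Calder\'on reproducing formula based on $(k_{0},k)$ can be constructed with $\lambda$ enjoying enough vanishing moments to match $S$, and that the representation converges in $\mathcal{S}'(\mathbb{R}^{n})$ for every $f\in B_{p(\cdot),q(\cdot)}^{\alpha(\cdot)}$.
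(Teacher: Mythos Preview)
Your high-level strategy---compare two Peetre maximal constructions by plugging one Calder\'on reproducing formula into the other and reading off kernel decay from Lemma~\ref{Ry-Lemma1}---is the same as the paper's, but the paper does \emph{not} compare $\|\cdot\|'$ with $\|\cdot\|_{\mathfrak{B}}^{\blacktriangledown}$ directly. It detours through the \emph{discrete} Peetre maximal norm. In Step~1 the paper expands $k_{t}\ast f$ via a discrete decomposition $f=\Lambda\ast\varphi_{0}\ast f+\sum_{j\ge1}\lambda_{j}\ast\varphi_{j}\ast f$, obtains a pointwise bound with weights $\min(2^{(j-i)(S+1-\alpha^{+})},2^{i-j})$ (where $2^{-i}\le t\le 2^{1-i}$), and then invokes the discrete variable-exponent Hardy inequality, Lemma~8 of \cite{KV122}, to pass to $\ell^{q(\cdot)}(L^{p(\cdot)})$. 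In Step~2, going back, it chooses the auxiliary $\lambda$ with $\mathrm{supp}\,\mathcal{F}\lambda$ in an annulus, so that $\varphi_{j}\ast\lambda_{\tau}=0$ unless $\tau\in[2^{-j-2},2^{-j+2}]$; the $\tau$-integral is then over an interval of bounded ratio and no Hardy inequality is needed, only a H\"older--Minkowski estimate on a finite window.

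Your purely continuous route has a gap at exactly the point the paper's detour is designed to avoid. After the pointwise inequality you need
\[
\Big\|\Big(\int_{0}^{1}\min\big((\tau/t)^{L},(t/\tau)^{S+1-\alpha^{+}}\big)\,h_{\tau}\,\tfrac{d\tau}{\tau}\Big)_{0<t\le1}\Big\|_{\widetilde{\ell^{q(\cdot)}(L^{p(\cdot)})}}\lesssim\|(h_{\tau})\|_{\widetilde{\ell^{q(\cdot)}(L^{p(\cdot)})}},
\]
and the combination ``Lemma~\ref{Hardy-inequality1} + Lemma~\ref{Al-Ha-Lemma 1}(ii)'' does not deliver this: Lemma~\ref{Hardy-inequality1} is a scalar inequality, while Lemma~\ref{Al-Ha-Lemma 1}(ii) treats only integrals over $[\alpha t,\beta t]$ with fixed finite $\alpha<\beta$ and with an $\eta_{\tau,m}$-convolution inside, not a bare weighted integral over all of $(0,1]$. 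The fix is standard---dyadically split $\int_{0}^{t}$ and $\int_{t}^{1}$ into pieces $\int_{2^{-k-1}t}^{2^{-k}t}$, pull out the geometric factor, and sum via the $\min(p^{-},q^{-})$-quasi-triangle inequality---but as written it is missing. Also note a slip in your displayed pointwise bound: the term $\Phi^{\ast,a}f(x)$ must carry the factor $t^{S+1-\alpha^{+}}$ (coming from the moment condition on $k$ applied to $k_{t}\ast\Phi$); without it the $\int_{0}^{1}\frac{dt}{t}$ of that contribution diverges.
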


\begin{proof}
The idea of the proof is from V. S. Rychkov \cite{Ry01}. The proof is
divided into three steps.

\noindent \textit{Step }1\textit{.} Let $\varepsilon >0$. Take any pair of
functions $\varphi _{0}$ and $\varphi \in \mathcal{S}(\mathbb{R}^{n})$ such
that%
\begin{align*}
|\mathcal{F}\varphi _{0}(\xi )|& >0\quad \text{for}\quad |\xi |<2\varepsilon
, \\
|\mathcal{F}\varphi (\xi )|& >0\quad \text{for}\quad \frac{\varepsilon }{2}%
<|\xi |<2\varepsilon .
\end{align*}%
We prove that there is a constant $c>0$ such that for any $f\in B_{p(\cdot
),q(\cdot )}^{\alpha (\cdot )}$%
\begin{equation}
\big\|f\big\|_{B_{p(\cdot ),q(\cdot )}^{\alpha (\cdot )}}^{\prime }\leq c%
\big\|\varphi _{0}^{\ast ,a}f\big\|_{p(\cdot )}+\Big\|\big(\varphi
_{j}^{\ast ,a}2^{j\alpha (\cdot )}f\big)_{j\geq 1}\Big\|_{\ell ^{q(\cdot
)}(L^{p(\cdot )})}.  \label{key-est1}
\end{equation}%
Let $\Lambda $, $\lambda \in \mathcal{S}(\mathbb{R}^{n})$ such that%
\begin{equation*}
\text{supp }\mathcal{F}\Lambda \subset \{\xi \in \mathbb{R}^{n}:|\xi
|<2\varepsilon \}\text{,\quad supp }\mathcal{F}\lambda \subset \{\xi \in 
\mathbb{R}^{n}:\varepsilon /2<|\xi |<2\varepsilon \}
\end{equation*}%
and%
\begin{equation*}
\mathcal{F}\Lambda (\xi )\mathcal{F}\varphi _{0}(\xi )+\sum_{j=1}^{\infty }%
\mathcal{F}\lambda (2^{-j}\xi )\mathcal{F}\varphi (2^{-j}\xi )=1,\quad \xi
\in \mathbb{R}^{n}.
\end{equation*}%
In particular, for any $f\in B_{p(\cdot ),q(\cdot )}^{\alpha (\cdot )}$ the
following identity is true:%
\begin{equation*}
f=\Lambda \ast \varphi _{0}\ast f+\sum_{j=1}^{\infty }\lambda _{j}\ast
\varphi _{j}\ast f,
\end{equation*}%
where%
\begin{equation*}
\varphi _{j}:=2^{jn}\varphi (2^{j}\cdot )\quad \text{and}\quad \lambda
_{j}:=2^{jn}\lambda (2^{j}\cdot ),\quad j\in \mathbb{N}.
\end{equation*}%
Hence we can write%
\begin{equation*}
k_{t}\ast f=k_{t}\ast \Lambda \ast \varphi _{0}\ast f+\sum_{j=1}^{\infty
}k_{t}\ast \lambda _{j}\ast \varphi _{j}\ast f,\quad t\in (0,1].
\end{equation*}%
Let $2^{-i}<t\leq 2^{1-i}$, $i\in \mathbb{N}_{0}$. First, let $j<i$. Writing
for any $z\in \mathbb{R}^{n}$%
\begin{equation*}
k_{t}\ast \lambda _{j}(z)=2^{jn}k_{2^{j}t}\ast \lambda (2^{j}z),
\end{equation*}%
we deduce from Lemma \ref{Ry-Lemma1} that for any $N>0$ there is a constant $%
c>0$ independent of $t$ and $j$ such that 
\begin{equation*}
|k_{t}\ast \lambda _{j}(z)|\leq c\text{ }\left( 2^{j}t\right) ^{S+1}\eta
_{j,N}(z),\quad z\in \mathbb{R}^{n}.
\end{equation*}%
This together with Lemma \ref{DHR-lemma} yield that%
\begin{equation*}
t^{-\alpha (y)}|k_{t}\ast \lambda _{j}\ast \varphi _{j}\ast f(y)|,
\end{equation*}%
can be estimated from above by 
\begin{equation*}
c2^{(j-i)(S+1-\alpha ^{+})}\varphi _{j}^{\ast ,a}2^{j\alpha (\cdot
)}f(y)\int_{\mathbb{R}^{n}}\eta _{j,N-c_{\log }(\alpha )-a}(y-z)dz\lesssim
2^{(j-i)(S+1-\alpha ^{+})}\varphi _{j}^{\ast ,a}2^{j\alpha (\cdot )}f(y)
\end{equation*}%
for any $N>n+a+c_{\log }(\alpha )$ any $y\in \mathbb{R}^{n}$ and any $j<i$.
Next, let $j\geq i$. Then, again by Lemma \ref{Ry-Lemma1}, we have for any $%
z\in \mathbb{R}^{n}$ and any $L>0$ 
\begin{equation*}
|k_{t}\ast \lambda _{j}(z)|=t^{-n}\big|k\ast \lambda _{\frac{1}{2^{j}t}}(%
\frac{z}{t})\big|\leq c\big(\frac{1}{2^{j}t}\big)^{M+1}\eta _{t,L}(z),
\end{equation*}%
where an integer $M\geq -1$ is taken arbitrarily large, since $D^{\beta }%
\mathcal{F}\lambda (0)=0$ for all $\beta $. Hence, again with Lemma \ref%
{DHR-lemma}, 
\begin{eqnarray*}
&&t^{-\alpha (y)}|k_{t}\ast \lambda _{j}\ast \varphi _{j}\ast f(y)| \\
&\leq &t^{-\alpha (y)}\int_{\mathbb{R}^{n}}|k_{t}\ast \lambda
_{j}(y-z)||\varphi _{j}\ast f(z)|dz \\
&\lesssim &2^{(i-j)(M+1+\alpha ^{-})-jn}\varphi _{j}^{\ast ,a}2^{j\alpha
(\cdot )}f(y)\int_{\mathbb{R}^{n}}\eta _{j,-c_{\log }(\alpha )-a}(y-z)\eta
_{i,L}(y-z)dz.
\end{eqnarray*}%
We have for any $j\geq i$%
\begin{equation*}
\left( 1+2^{j}\left\vert z\right\vert \right) ^{c_{\log }(\alpha )+a}\leq
2^{(j-i)(c_{\log }(\alpha )+a)}\left( 1+2^{i}\left\vert z\right\vert \right)
^{c_{\log }(\alpha )+a}.
\end{equation*}%
Then, by taking $L>n+a+c_{\log }(\alpha )$, 
\begin{equation*}
t^{-\alpha (y)}|k_{t}\ast \lambda _{j}\ast \varphi _{j}\ast f(y)|\lesssim
2^{(i-j)(M+1+\alpha ^{-}-c_{\log }(\alpha )-a)}\varphi _{j}^{\ast
,a}2^{j\alpha (\cdot )}f(y).
\end{equation*}%
Let us take $M>c_{\log }(\alpha )-\alpha ^{-}+2a$ to estimate the last
expression by%
\begin{equation*}
c\text{ }2^{(i-j)(a+1)}\varphi _{j}^{\ast ,a}2^{j\alpha (\cdot )}f(y),
\end{equation*}%
where $c>0$ is independent of $i,j$ and $f$. Using the fact that for any $%
z\in \mathbb{R}^{n}$ and any $N>0$ 
\begin{equation*}
|k_{t}\ast \Lambda (z)|\leq c\text{ }t^{S+1}\eta _{1,N}(z),
\end{equation*}%
we obtain by the similar arguments that for any $2^{-i}\leq t\leq 2^{-i+1}$, 
$i\in \mathbb{N}$%
\begin{equation*}
\sup_{y\in \mathbb{R}^{n}}\frac{t^{-\alpha (y)}|k_{t}\ast \Lambda \ast
\varphi _{0}\ast f(y)|}{(1+t^{-1}\left\vert x-y\right\vert )^{a}}\leq C\text{
}2^{-i(S+1-\alpha ^{+})}\varphi _{0}^{\ast ,a}f(x).
\end{equation*}%
Further, note that for all $x,y\in \mathbb{R}^{n}$ all $2^{-i}\leq t\leq
2^{1-i}$, $i\in \mathbb{N}$ and any $j\in \mathbb{N}_{0}$%
\begin{eqnarray*}
\varphi _{j}^{\ast ,a}2^{j\alpha (\cdot )}f(y) &\leq &\varphi _{j}^{\ast
,a}2^{j\alpha (\cdot )}f(x)(1+2^{j}\left\vert x-y\right\vert )^{a} \\
&\leq &\varphi _{j}^{\ast ,a}2^{j\alpha (\cdot )}f(x)\max
(1,2^{(j-i)a})(1+2^{i}\left\vert x-y\right\vert )^{a}.
\end{eqnarray*}%
Hence%
\begin{equation*}
\sup_{y\in \mathbb{R}^{n}}\frac{t^{-\alpha (y)}|k_{t}\ast \lambda _{j}\ast
\varphi _{j}\ast f(y)|}{(1+t^{-1}\left\vert x-y\right\vert )^{a}}\leq C\text{
}\varphi _{j}^{\ast ,a}2^{j\alpha (\cdot )}f(x)\times \left\{ 
\begin{array}{ccc}
2^{(j-i)(S+1-\alpha ^{+})} & \text{if} & j<i, \\ 
2^{i-j} & \text{if} & j\geq i.%
\end{array}%
\right. 
\end{equation*}%
Therefore for all $f\in B_{p(\cdot ),q(\cdot )}^{\alpha (\cdot )}$, any $%
x\in \mathbb{R}^{n}$ and any $2^{-i}\leq t\leq 2^{1-i}$, $i\in \mathbb{N}_{0}
$, we get 
\begin{eqnarray*}
&&k_{t}^{\ast ,a}t^{-\alpha (\cdot )}f(x) \\
&\lesssim &2^{-i(S+1-\alpha ^{+})}\varphi _{0}^{\ast
,a}f(x)+C\sum_{j=1}^{\infty }\min \Big(2^{(j-i)(S+1-\alpha ^{+})},2^{i-j}%
\Big)\varphi _{j}^{\ast ,a}2^{j\alpha (\cdot )}f(x) \\
&=&C\sum_{j=0}^{\infty }\min \Big(2^{(j-i)(S+1-\alpha ^{+})},2^{i-j}\Big)%
\varphi _{j}^{\ast ,a}2^{j\alpha (\cdot )}f(x) \\
&=&C\Psi _{i}(x).
\end{eqnarray*}%
Assume that the right hand side of \eqref{key-est1} is less than or equal
one. We have%
\begin{eqnarray*}
\int_{0}^{1}\big\||k_{t}^{\ast ,a}t^{-\alpha (\cdot )}f|^{q(\cdot )}\big\|_{%
\frac{p(\cdot )}{q(\cdot )}}\frac{dt}{t} &=&\sum_{i=0}^{\infty
}\int_{2^{-i}}^{2^{1-i}}\big\||k_{t}^{\ast ,a}t^{-\alpha (\cdot
)}f|^{q(\cdot )}\big\|_{\frac{p(\cdot )}{q(\cdot )}}\frac{dt}{t} \\
&\leq &\sum_{i=0}^{\infty }\big\||c\Psi _{i}|^{q(\cdot )}\big\|_{\frac{%
p(\cdot )}{q(\cdot )}}
\end{eqnarray*}%
for some positive constant $c$. The last term on the right hand side is less
than or equal one if and only if%
\begin{equation*}
\big\|\left( c_{1}\Psi _{i}\right) _{i}\big\|_{\ell ^{q(\cdot )}(L^{p(\cdot
)})}\leq 1
\end{equation*}%
for some suitable positive constant $c_{1}$, which follows by Lemma 8 of 
\cite{KV122} and the fact that $\alpha ^{+}<S+1$. Also we have for any $z\in 
\mathbb{R}^{n}$, any $N>0$ and any integer $M\geq -1$%
\begin{equation*}
\left\vert k_{0}\ast \lambda _{j}(z)\right\vert \leq c2^{-j(M+1)}\eta
_{j,N}(z)\quad \text{and}\quad \left\vert k_{0}\ast \Lambda (z)\right\vert
\leq c\text{ }\eta _{1,N}(z).
\end{equation*}%
As before, we get for any $x\in \mathbb{R}^{n}$%
\begin{equation}
k_{0}^{\ast ,a}f(x)\leq C\varphi _{0}^{\ast ,a}f(x)+C\sum_{j=1}^{\infty
}2^{-j}\varphi _{j}^{\ast ,a}2^{j\alpha (\cdot )}f(x).  \label{estk0}
\end{equation}%
In \eqref{estk0} taking the $L^{p(\cdot )}$-quasi-norm and using the
embedding $\ell ^{q(\cdot )}(L^{p(\cdot )})\hookrightarrow \ell ^{\infty
}(L^{p(\cdot )})$ we get \eqref{key-est1}.

\noindent \textit{Step} 2.\ Let $\left\{ \mathcal{F}\varphi _{j}\right\}
_{j\in \mathbb{N}_{0}}\subset \mathcal{S}(\mathbb{R}^{n})$ be such that 
\begin{equation*}
\mathrm{supp}\,\mathcal{F}\varphi \subset \big\{\xi \in \mathbb{R}%
^{n}:\varepsilon /2\leq \left\vert \xi \right\vert \leq 2\varepsilon \big\}
\end{equation*}%
and%
\begin{equation*}
\mathrm{supp}\,\mathcal{F}\varphi _{0}\subset \big\{\xi \in \mathbb{R}%
^{n}:\left\vert \xi \right\vert \leq 2\varepsilon \big\},\quad \varepsilon
>0,
\end{equation*}%
with $\varphi _{j}=2^{jn}\varphi (2^{j}\cdot ),j\in \mathbb{N}$. We will
prove that%
\begin{equation}
\big\|\varphi _{0}\ast f\big\|_{p(\cdot )}+\Big\|\big(2^{j\alpha (\cdot
)}(\varphi _{j}\ast f)\big)_{j\geq 1}\Big\|_{\ell ^{q(\cdot )}(L^{p(\cdot
)})}\lesssim \big\|f\big\|_{B_{p(\cdot ),q(\cdot )}^{\alpha (\cdot
)}}^{\prime }  \label{fin-est}
\end{equation}%
Let $\Lambda $, $\lambda \in \mathcal{S}(\mathbb{R}^{n})$ such that 
\begin{equation*}
\text{supp }\mathcal{F}\Lambda \subset \{\xi \in \mathbb{R}^{n}:\left\vert
\xi \right\vert <2\varepsilon \}\text{,\quad supp }\mathcal{F}\lambda
\subset \{\xi \in \mathbb{R}^{n}:\varepsilon /2<\left\vert \xi \right\vert
<2\varepsilon \},
\end{equation*}%
\begin{equation*}
\mathcal{F}\Lambda (\xi )\mathcal{F}k_{0}(\xi )+\int_{0}^{1}\mathcal{F}%
\lambda (\tau \xi )\mathcal{F}k(\tau \xi )\frac{d\tau }{\tau }=1,\quad \xi
\in \mathbb{R}^{n}.
\end{equation*}%
In particular, for any $f\in B_{p(\cdot ),q(\cdot )}^{\alpha (\cdot )}$ the
following identity is true:%
\begin{equation*}
f=\Lambda \ast k_{0}\ast f+\int_{0}^{1}\lambda _{\tau }\ast k_{\tau }\ast f%
\frac{d\tau }{\tau }.
\end{equation*}%
Hence we can write%
\begin{equation*}
\varphi _{j}\ast f=\int_{0}^{1}\varphi _{j}\ast \lambda _{\tau }\ast k_{\tau
}\ast f\frac{d\tau }{\tau }=\int_{2^{-j-2}}^{2^{-j+2}}\varphi _{j}\ast
\lambda _{\tau }\ast k_{\tau }\ast f\frac{d\tau }{\tau },\quad j\geq 2.
\end{equation*}%
Using the fact that%
\begin{equation*}
\max (|k_{\tau }\ast \lambda _{\tau }(z)|,|\varphi _{j}\ast \lambda _{\tau
}(z)|)\lesssim \eta _{j,N}(z),\quad z\in \mathbb{R}^{n},2^{-j-2}\leq \tau
\leq 2^{-j+2},j\in \mathbb{N}
\end{equation*}%
and Lemma \ref{DHR-lemma}, with $N>0$ large enough, we easily obtain%
\begin{equation*}
2^{j\alpha (y)}|\varphi _{j}\ast \lambda _{\tau }\ast k_{\tau }\ast
f(y)|\lesssim \min (k_{\tau }^{\ast ,a}\tau ^{-\alpha (\cdot )}f(y),\varphi
_{j}^{\ast ,a}2^{j\alpha (y)}f(y))
\end{equation*}%
for any $y\in \mathbb{R}^{n}$ and any $2^{-j+2}\leq \tau \leq 2^{-j-2},j\in 
\mathbb{N}$. Therefore%
\begin{equation*}
2^{j\alpha (y)}|\varphi _{j}\ast f(y)|\lesssim \big(\varphi _{j}^{\ast
,a}2^{j\alpha (\cdot )}f(y)\big)^{1-r}\int_{2^{-j-2}}^{2^{-j+2}}\left(
k_{\tau }^{\ast ,a}\tau ^{-\alpha (\cdot )}f(y)\right) ^{r}\frac{d\tau }{%
\tau },\quad 0<r<1,
\end{equation*}%
which yields that%
\begin{equation*}
\varphi _{j}^{\ast ,a}2^{j\alpha (\cdot )}f(x)\lesssim \big(\varphi
_{j}^{\ast ,a}2^{j\alpha (\cdot )}f(x)\big)^{1-r}\int_{2^{-j-2}}^{2^{-j+2}}%
\left( k_{\tau }^{\ast ,a}\tau ^{-\alpha (\cdot )}f(x)\right) ^{r}\frac{%
d\tau }{\tau }.
\end{equation*}%
This estimate gives%
\begin{equation*}
\big(\varphi _{j}^{\ast ,a}2^{j\alpha (\cdot )}f(x)\big)^{r}\lesssim
\int_{2^{-j-2}}^{2^{-j+2}}\left( k_{\tau }^{\ast ,a}\tau ^{-\alpha (\cdot
)}f(x)\right) ^{r}\frac{d\tau }{\tau }
\end{equation*}%
and%
\begin{equation}
2^{j\alpha (x)r}|\varphi _{j}\ast f(x)|^{r}\lesssim
\int_{2^{-j-2}}^{2^{-j+2}}\left( k_{\tau }^{\ast ,a}\tau ^{-\alpha (\cdot
)}f(x)\right) ^{r}\frac{d\tau }{\tau },\quad x\in \mathbb{R}^{n},
\label{main-est}
\end{equation}%
but if $\varphi _{j}^{\ast ,a}2^{j\alpha (\cdot )}f(x)<\infty $. Using a
combination of the arguments used in Lemma \ref{Main-Lemma 2}, we get %
\eqref{main-est} for all $0<r<1$, $a>0$ and all $f\in B_{p(\cdot ),q(\cdot
)}^{\alpha (\cdot )}$. Similarly we obtain%
\begin{equation*}
|\varphi _{j}\ast f(x)|^{r}\lesssim \left( k_{0}^{\ast ,a}f(x)\right)
^{r}+\int_{\frac{1}{8}}^{1}\left( k_{\tau }^{\ast ,a}\tau ^{-\alpha (\cdot
)}f(x)\right) ^{r}\frac{d\tau }{\tau },\quad j=0,1
\end{equation*}%
for any $0<r<1,a>0$ and any $f\in B_{p(\cdot ),q(\cdot )}^{\alpha (\cdot )}$%
. Let $\theta >0$ be such that $\max (1,\frac{(\frac{1}{p})^{+}}{(\frac{1}{q}%
)^{-}})<\theta <\frac{q^{-}}{r}$. H\"{o}lder's and Minkowski's inequalities
yield%
\begin{eqnarray*}
\big\||c2^{j\alpha (\cdot )}(\varphi _{j}\ast f)|^{q(\cdot )}\big\|_{\frac{%
p(\cdot )}{q(\cdot )}} &\leq &\Big(\int_{2^{-j-2}}^{2^{-j+2}}\big\||k_{\tau
}^{\ast ,a}\tau ^{-\alpha (\cdot )}f|^{q(\cdot )}\big\|_{\frac{p(\cdot )}{%
q(\cdot )}}^{\frac{1}{\theta }}\frac{d\tau }{\tau }\Big)^{\theta } \\
&\leq &\int_{2^{-j-2}}^{2^{-j+2}}\big\||k_{\tau }^{\ast ,a}\tau ^{-\alpha
(\cdot )}f|^{q(\cdot )}\big\|_{\frac{p(\cdot )}{q(\cdot )}}\frac{d\tau }{%
\tau }.
\end{eqnarray*}%
We obtain%
\begin{equation*}
\sum_{j=2}^{\infty }\big\||c2^{j\alpha (\cdot )}(\varphi _{j}\ast
f)|^{q(\cdot )}\big\|_{\frac{p(\cdot )}{q(\cdot )}}\leq 1,
\end{equation*}%
with an appropriate choice of $c>0$ such that the left hand side of %
\eqref{main-est} it at most one. Similarly we obtain%
\begin{equation*}
\big\||c\varphi _{j}\ast f|^{q(\cdot )}\big\|_{\frac{p(\cdot )}{q(\cdot )}%
}\leq 1,\quad j=0,1.
\end{equation*}%
The desired estimate follows by the scaling argument.

\noindent \textit{Step} 3.\ We will prove in this step that for all $f\in
B_{p(\cdot ),q(\cdot )}^{\alpha (\cdot )}$ the following estimates are true:%
\begin{equation}
\big\|f\big\|_{B_{p(\cdot ),q(\cdot )}^{\alpha (\cdot )}}^{\prime }\lesssim %
\big\|f\big\|_{B_{p(\cdot ),q(\cdot )}^{\alpha (\cdot )}}\lesssim \big\|f%
\big\|_{B_{p(\cdot ),q(\cdot )}^{\alpha (\cdot )}}^{\prime }.
\label{fin-est1}
\end{equation}%
Let $\{\mathcal{F}\varphi _{j}\}_{j\in \mathbb{N}_{0}}$ be a resolution of
unity. The first inequality follows by the chain of the estimates%
\begin{equation*}
\big\|f\big\|_{B_{p(\cdot ),q(\cdot )}^{\alpha (\cdot )}}^{\prime }\lesssim %
\big\|\varphi _{0}^{\ast ,a}f\big\|_{p(\cdot )}+\Big\|\big(\varphi
_{j}^{\ast ,a}2^{j\alpha (\cdot )}f\big)_{j\geq 1}\Big\|_{\ell ^{q(\cdot
)}(L^{p(\cdot )})}\lesssim \big\|f\big\|_{B_{p(\cdot ),q(\cdot )}^{\alpha
(\cdot )}},
\end{equation*}%
where the first inequality is \eqref{key-est1}, see Step 1 and the second
inequality is obvious, see \cite{D3}. Now the second inequality in %
\eqref{fin-est1} can be obtained by the following chain of the estimates 
\begin{equation*}
\big\|f\big\|_{B_{p(\cdot ),q(\cdot )}^{\alpha (\cdot )}}\lesssim \big\|%
\varphi _{0}\ast f\big\|_{p(\cdot )}+\Big\|\big(2^{j\alpha (\cdot )}(\varphi
_{j}\ast f)\big)_{j\geq 1}\Big\|_{\ell ^{q(\cdot )}(L^{p(\cdot )})}\lesssim %
\big\|f\big\|_{B_{p(\cdot ),q(\cdot )}^{\alpha (\cdot )}}^{\prime },
\end{equation*}%
where the first inequality is obvious and the second inequality is %
\eqref{fin-est}, see Step 2. Thus, Theorem \ref{loc-mean-char} is proved.
\end{proof}

\end{document}